\numberwithin{equation}{section}
\newtheorem{theorem}[equation]{Theorem}
\newtheorem{lemma}[equation]{Lemma}
\newtheorem{corollary}[equation]{Corollary}
\theoremstyle{definition}
\newtheorem{definition}[equation]{Definition}
\theoremstyle{remark}
\newcommand{\R}{\mathbb{R}}
\newcommand{\Z}{\mathbb{Z}}
\newcommand{\N}{\mathbb{N}}
\newcommand{\D}{\mathcal{D}}
\newcommand{\C}{\mathcal{C}}
\newcommand{\eps}{\varepsilon}
\newcommand{\cl}{\overline}
\newcommand{\diam}{\mathrm{diam}}
\newcommand{\dmu}{\, \mathrm{d} \mu}
\newcommand{\Qka}{Q^k_\alpha}
\newcommand{\Qkb}{Q^k_\beta}
\newcommand{\Qlb}{Q^l_\beta}
\newcommand{\zka}{z^k_\alpha}
\newcommand{\zkb}{z^k_\beta}
\newcommand{\zlb}{z^l_\beta}
\def\pre{~\lower4pt\hbox{$\stackrel{\displaystyle
\succ}{\displaystyle \sim}$}~}
\def\prei{~\lower4pt\hbox{${\stackrel{\displaystyle
\succ}{\displaystyle \sim}}_i$}~}
\def\prej{~\lower4pt\hbox{${\stackrel{\displaystyle
\succ}{\displaystyle \sim}}_j$}~}
\def\preI{~\lower4pt\hbox{${\stackrel{\displaystyle
\succ}{\displaystyle \sim}}_I$}~}
\def\pres{~\lower4pt\hbox{${\stackrel{\displaystyle
\succ}{\displaystyle \sim}}^*$}~}
\begin{document}

\title{Dyadic cubes in spaces of homogeneous type}
\date{}
\author{Janne Korvenpää}
\maketitle
\begin{center}
Department of Mathematics and Systems Analysis, Aalto University, \\ Espoo, Finland \\
janne.korvenpaa@aalto.fi
\end{center}

%\clearpage
\section{Introduction}

The system of Euclidean dyadic cubes used in harmonic analysis can be generalized to more abstract spaces. In this text, we present Michael Christ's construction of dyadic cubes in a space of homogeneous type, i.e. a quasi-metric space with a doubling measure. The essential properties of the dyadic cubes are that they form a tree structure such that any two of them are either disjoint or one is contained in the other, and that each generation of cubes covers the whole space excluding a possible set of measure zero. In addition, dyadic cubes are not too far away from balls in the sense that they are bounded by balls of the same magnitude from inside and outside.

Christ's paper [\ref{christ}] is the only known piece of literature in which the construction of dyadic cubes is properly presented, and even it lacks several relevant steps and arguments in its proofs. This text follows the construction considered in Christ's paper without bringing out anything totally new. Instead, we have been aiming at presenting the construction more clearly, comprehensively and transparently. Especially, more steps and arguments have been written out in proofs, and the assumptions of lemmas are clearly stated. In addition, some new notation has been used to clarify the deductions made in proofs.

\section{Spaces of homogeneous type}

In this section, we introduce the space we are considering, and some tools needed for defining dyadic cubes. First, we need a quasi-metric.
\begin{definition}[Quasi-metric] \label{quasi}
A quasi-metric in the set $X$ is a function $\rho: X \times X \to [0,\infty)$ satisfying the following conditions for all $x,y,z \in X$:
\begin{eqnarray}
\label{quasizero} \rho(x,y) & = & 0 \text{ if and only if } x = y, \\
\label{quasisymmetricity} \rho(x,y) & = & \rho(y,x), \\
\label{quasitriangle} \rho(x,z) & \leq & A_0 \rho(x,y) + A_0 \rho(y,z),
\end{eqnarray}
where $A_0 \geq 1$ is a constant that does not depend on $x,y$ and $z$. 
\end{definition}
Like a normal metric, quasi-metric defines balls, diameters of subsets of $X$ and distances between subsets:
\begin{eqnarray*}
B(x,r) & := & \{y \in X : \rho(x,y) < r\}, \quad x \in X, r > 0, \\
\diam(A) & := & \sup_{x,y \in A} \rho(x,y), \quad A \subset X, \\
\rho(x,A) & := & \inf_{y \in A} \rho(x,y), \quad x \in X, A \subset X, \\
\quad \rho(A,B) & := & \inf_{x \in A, y \in B} \rho(x,y), \quad A,B \subset X.
\end{eqnarray*}
However, quasi-metric balls are not necessarily open sets.

In addition to a quasi-metric, we need a doubling measure that is consistent with the chosen quasi-metric.
\begin{definition}[Doubling measure] \label{doublingmeasure}
A doubling measure in the space $(X,\rho)$ is a Borel regular measure $\mu$ such that the balls of $(X,\rho)$ are $\mu$-measurable sets and the following conditions hold for all $x \in X$, $r > 0$:
\begin{equation} \label{doublingfinite}
0 < \mu(B(x,r)) < \infty,
\end{equation}
\begin{equation} \label{doubling}
\mu(B(x,2r)) \leq A_1 \mu(B(x,r)),
\end{equation}
where $A_1 \geq 1$ is a constant that does not depend on $x$ and $r$.
\end{definition}

Now we can define a space of homogeneous type.
\begin{definition}[Space of homogeneous type] \label{homogeneous}
A space of homogeneous type is a triple $(X,\rho,\mu)$, where $X$ is a non-empty set, $\rho$ a quasi-metric in $X$ such that all balls in the space $(X,\rho)$ are open sets, and $\mu$ a doubling measure in the space $(X,\rho)$.
\end{definition}

Lebesgue's theorem on differentiation holds in spaces of homogeneous type.
\begin{theorem}[Lebesgue's theorem on differentiation] \label{lebesgue}
Let $f$ be locally integrable function in the space of homogeneous type $(X,\rho,\mu)$. Then
\begin{equation*}
\lim_{r \to 0} \frac{1}{\mu(B(x,r))} \int_{B(x,r)} f \dmu = f(x)
\end{equation*}
for almost every $x \in X$.
\end{theorem}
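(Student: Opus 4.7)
The plan is to follow the classical three-step strategy, adapted to the doubling setting. Define the centered Hardy--Littlewood maximal operator
\[
Mf(x) = \sup_{r>0} \frac{1}{\mu(B(x,r))} \int_{B(x,r)} |f| \dmu.
\]
The three steps will be: (i) prove a weak-type $(1,1)$ estimate $\mu(\{Mf > \lambda\}) \leq C \|f\|_{L^1}/\lambda$; (ii) verify pointwise convergence on a dense subclass of $L^1$; (iii) combine (i) and (ii) via a standard approximation.

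The main obstacle is (i), which I would approach through a Vitali-type covering lemma: from any family of quasi-metric balls of uniformly bounded radii one can extract a pairwise disjoint subfamily whose $K$-fold dilates cover the union, with $K = K(A_0)$ depending only on the quasi-triangle constant. The openness of balls permits the usual greedy/Zorn selection. Iterating the doubling bound \eqref{doubling} roughly $\log_2 K$ times yields $\mu(B(x,Kr)) \leq C_1 \mu(B(x,r))$, so the disjoint subfamily controls the measure of the original union. Applying this to a cover of $\{Mf > \lambda\}$ by balls on which the average of $|f|$ strictly exceeds $\lambda$ produces the desired weak-type bound. The quasi-metric constant $A_0 > 1$ is the only new feature compared with the Euclidean case: it forces $K$ to be larger than the classical value $5$, but does not change the logic.

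For (ii), I would take the dense subclass to be the continuous functions with bounded support, which are dense in $L^1(X,\mu)$ by a Luzin-type argument using the Borel regularity of $\mu$ together with the openness of balls. For such a continuous $g$, pointwise continuity at $x$ immediately yields
\[
\lim_{r \to 0} \frac{1}{\mu(B(x,r))} \int_{B(x,r)} |g - g(x)| \dmu = 0,
\]
because $|g - g(x)|$ is uniformly small on $B(x,r)$ for small $r$.

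For (iii), cover $X$ by countably many balls and fix one such $B_0$, so that $f \in L^1(B_0)$ by local integrability. Setting
\[
\Omega f(x) = \limsup_{r \to 0} \frac{1}{\mu(B(x,r))} \int_{B(x,r)} |f - f(x)| \dmu,
\]
sublinearity together with step (ii) gives $\Omega f(x) \leq M(f-g)(x) + |f(x) - g(x)|$ for any continuous $g$ of bounded support. Given $\eps > 0$, choose such a $g$ with $\|f - g\|_{L^1(B_0)} < \eps$; the weak $(1,1)$ bound and Chebyshev's inequality then yield
\[
\mu(\{x \in B_0 : \Omega f(x) > \lambda\}) \leq (C+1)\eps/\lambda.
\]
Letting $\eps \to 0$ and taking a countable union over $\lambda = 1/n$ forces $\Omega f = 0$ $\mu$-a.e.\ on $B_0$, and a countable union over the covering of $X$ completes the proof.
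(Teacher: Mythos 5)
The paper does not prove this theorem: it is quoted as a known background result (it is the classical Lebesgue differentiation theorem on spaces of homogeneous type, due essentially to Coifman--Weiss), so there is no internal proof to compare against. Your outline is the standard and correct route --- maximal function, weak-type $(1,1)$ via a Vitali-type covering lemma with dilation constant depending on $A_0$, density argument --- and each step is sound in spirit. Two points deserve more care before this counts as a complete proof. First, in step (i) the balls $B(x,r_x)$ witnessing $Mf(x)>\lambda$ have small \emph{measure} ($<\|f\|_1/\lambda$) but not a priori bounded \emph{radius}, so you cannot directly feed them to your covering lemma; the usual fix is to prove the bound for the truncated maximal operator $M_R$ (radii $\le R$) and let $R\to\infty$, or to work on a bounded piece of $X$ --- which meshes with the localization you already use in step (iii). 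Second, in step (ii) the existence of enough continuous functions is not automatic: a quasi-metric $\rho$ need not be a continuous function of its arguments, so $x\mapsto\rho(x,C)$ need not be continuous and the naive Urysohn/Tietze construction can fail. One either invokes the Mac\'{\i}as--Segovia theorem (every quasi-metric is equivalent to a power of a metric, so the topology is metrizable and Lipschitz cutoffs exist for the equivalent metric), or replaces the dense class by something intrinsically measure-theoretic. With those two repairs the argument is complete, and the constants depend only on $A_0$ and $A_1$ as they should.
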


We also need a property that a disperse subset in a space of homogeneous type cannot contain arbitralily many points in a ball. In the literature, such property is known as a finite Assouad dimension of the quasi-metric space $(X,\rho)$.
\begin{lemma} \label{assouad}
In the space of homogeneous type $(X,\rho,\mu)$, for each $K > 0$ there exists $N \in \N$ such that for each $x \in X$, for each $r > 0$ every set of the form
\begin{equation*}
A = \{ z_1, z_2, z_3, \ldots \, : \rho(z_i,z_j) \geq r, \text{ when } i \neq j \} \subset X
\end{equation*} 
contains at most $N$ points in the ball $B(x,Kr)$.
\end{lemma}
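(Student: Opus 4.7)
My plan is a standard packing argument. I will place a small ball around each point $z_i$ of $A \cap B(x, Kr)$, arrange these small balls to be pairwise disjoint and all contained in one slightly larger ball around $x$, and then use the doubling condition to force each small ball to carry a definite fraction of the large ball's measure. A pigeonhole count will then yield the desired upper bound on the number of points.

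For the first step I would choose the radius $s := r/(2A_0)$. Two applications of the quasi-triangle inequality \eqref{quasitriangle} give the two facts I need. If $y \in B(z_i, s) \cap B(z_j, s)$, then $\rho(z_i, z_j) \leq A_0 \rho(z_i, y) + A_0 \rho(y, z_j) < 2A_0 s = r$, contradicting the $r$-separation of $A$; hence the balls $B(z_i, s)$ are pairwise disjoint. Second, for $y \in B(z_i, s)$,
\begin{equation*}
\rho(x,y) \leq A_0 \rho(x, z_i) + A_0 \rho(z_i, y) < A_0 Kr + A_0 s =: T,
\end{equation*}
so $B(z_i, s) \subset B(x, T)$. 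A symmetric estimate in the opposite direction yields $B(x, T) \subset B(z_i, R)$ with $R := A_0 Kr + A_0 T$, and the ratio $C := R/s$ depends only on $K$ and $A_0$, not on $r$.

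For the second step I would iterate the doubling condition \eqref{doubling} at most $n := \lceil \log_2 C \rceil$ times to get $\mu(B(z_i, R)) \leq A_1^n \mu(B(z_i, s))$, whence $\mu(B(z_i, s)) \geq A_1^{-n} \mu(B(x, T))$. Summing over the $M$ points of $A \cap B(x, Kr)$ and combining disjointness with containment gives
\begin{equation*}
M \cdot A_1^{-n} \mu(B(x,T)) \leq \sum_{i=1}^{M} \mu(B(z_i, s)) \leq \mu(B(x,T)),
\end{equation*}
and by \eqref{doublingfinite} the measure $\mu(B(x,T))$ is positive and finite, so dividing yields $M \leq A_1^n$. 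Setting $N := A_1^n$ then completes the argument.

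I do not foresee any deep obstacle; the delicate point is merely the bookkeeping of the factors of $A_0$ that accumulate with each use of the quasi-triangle inequality, since those factors determine $C$, the exponent $n$, and hence the size of $N$. Because every relevant quantity is scale-invariant in $r$, the resulting $N$ depends only on $K$, $A_0$, and $A_1$, as required.
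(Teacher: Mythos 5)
Your proof is correct. The paper actually states Lemma \ref{assouad} without any proof, so your packing argument (disjoint balls of radius $r/(2A_0)$, nested between $B(x,T)$ and $B(z_i,R)$ with a scale-invariant ratio, then iterated doubling and a measure count using (\ref{doublingfinite})) supplies exactly the standard argument the author is implicitly relying on; the only cosmetic remark is that if $A\cap B(x,Kr)$ were infinite you would apply the bound to arbitrary finite subsets first, which is immediate.
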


\section{Dyadic cubes}
Let $(X,\rho,\mu)$ be a space of homogeneous type with a quasi-triangle inequality constant $A_0$ and a doubling constant $A_1$.

We start constructing dyadic cubes by fixing a reference point in each set corresponding a dyadic cube. Let $\delta \in (0,1)$ be a small parameter to be determined later, and for each $k \in \Z$, fix a maximal set $Z_k \subset X$ such that
\begin{equation} \label{zka}
\rho(\zka,z^k_\beta) \geq \delta^k \quad \text{for every } \zka, \zkb \in Z_k, \zka \neq z^k_\beta.
\end{equation}
In this context, maximality means that no new points of the set $X$ can be added to the set $Z_k$ such that (\ref{zka}) remains valid. Thus, a maximal set is not unique here, but it exists. The set $Z_k$ can be finite or countably infinite depending on the space $(X,\rho)$. Anyway, $Z_k$ is non-empty since $X \neq \varnothing$.

The index $k$ determines the generation of a dyadic cube, and $\zka \in Z_k$ can be seen as the center of the corresponding dyadic cube $\Qka$ which will be defined further on. The parameter $\delta$, instead, determines the minimum distance between two centers in the same generation and the scaling between consecutive generations.

We label the points in the sets $Z_k$, $k \in \Z$, with an index set $I_k$ such that
\begin{equation}
\alpha \in I_k \text{ if and only if } \zka \in Z_k.
\end{equation}
By maximality of the sets $Z_k$, it holds
\begin{equation} \label{zkamaximality}
\text{for each } x \in X, \text{ for each } k \in \Z \text{ there exists } \alpha \in I_k \text{ such that } \rho(x,\zka) < \delta^k.
\end{equation}

When the reference points are fixed, they help us to construct a partial order in the set of index pairs formed by the reference points. Such partial order is needed to define the dyadic cubes.
\begin{lemma} \label{potree}
There exists a partial order $\preceq$ in the set $\{(k,\alpha) : k \in \Z, \alpha \in I_k\}$ satisfying the following conditions.
\begin{enumerate}[(a)]
\item \label{pogeneration} If $(k,\alpha) \preceq (l,\beta)$, then $k \geq l$.
\item \label{poanchestor} For each $(k, \alpha)$ and $l \leq k$ there exists a unique $\beta \in I_l$ such that $(k,\alpha) \preceq (l,\beta)$.
\item \label{pochildren} If $(k,\alpha) \preceq (k - 1, \beta)$, then $\rho(\zka, z^{k-1}_\beta) < \delta^{k-1}$.
\item \label{poparent} If $\displaystyle{\rho(\zka, z^{k-1}_\beta) < \frac{1}{2 A_0} \delta^{k-1}}$, then $(k,\alpha) \preceq (k - 1, \beta)$.
\end{enumerate}
\end{lemma}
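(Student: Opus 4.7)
The plan is to define a ``parent'' function $p$ taking each index pair $(k,\alpha)$ to a pair $(k-1,\beta)$, and then set $(k,\alpha) \preceq (l,\beta)$ if and only if $l \leq k$ and the $(k-l)$-fold iterate $p^{k-l}(k,\alpha)$ equals $(l,\beta)$. With this definition reflexivity (take $l=k$), antisymmetry (forced by $k \geq l$ and $l \geq k$), and transitivity (additivity of iteration counts) are automatic, so the entire content lies in choosing $p$ so that both (c) and (d) hold.

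For each pair $(k,\alpha)$, I would form the candidate set
\[
C_{k,\alpha} = \{ \beta \in I_{k-1} : \rho(\zka, z^{k-1}_\beta) < \delta^{k-1} \},
\]
which is nonempty by the maximality property (\ref{zkamaximality}). If there exists $\beta \in C_{k,\alpha}$ satisfying the stronger bound $\rho(\zka, z^{k-1}_\beta) < \frac{1}{2A_0} \delta^{k-1}$, I declare $p(k,\alpha) = (k-1,\beta)$; otherwise I arbitrarily pick any $\beta \in C_{k,\alpha}$. The delicate point is that the ``forced'' case must be unambiguous, and this is the single nontrivial claim of the proof: for each $(k,\alpha)$ there is at most one $\beta \in I_{k-1}$ with $\rho(\zka, z^{k-1}_\beta) < \frac{1}{2A_0} \delta^{k-1}$. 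Indeed, if $\beta_1$ and $\beta_2$ both satisfied this bound, then the quasi-triangle inequality (\ref{quasitriangle}) would yield
\[
\rho(z^{k-1}_{\beta_1}, z^{k-1}_{\beta_2}) \leq A_0 \cdot \tfrac{1}{2A_0}\delta^{k-1} + A_0 \cdot \tfrac{1}{2A_0}\delta^{k-1} = \delta^{k-1},
\]
contradicting the separation property (\ref{zka}) of $Z_{k-1}$ unless $\beta_1 = \beta_2$.

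Once $p$ is well-defined, the four properties follow mechanically: (a) is immediate from the definition of $\preceq$; (b) says that iterating $p$ exactly $k-l$ times produces both a well-defined image (existence) and a unique one (since $p$ is a function); (c) is exactly the construction $p(k,\alpha) \in C_{k,\alpha}$; and (d) is the content of the uniqueness statement proved above, which forces $p(k,\alpha)$ to equal the unique ``forced'' candidate whenever one exists. The one genuine obstacle throughout is the uniqueness claim in the forced case, and the constant $1/(2A_0)$ appearing in (d) is precisely the largest value for which the quasi-triangle inequality prevents two distinct candidates from coexisting.
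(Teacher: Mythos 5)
Your approach is essentially the same as the paper's: define a unique parent for each index pair by prioritizing the $\tfrac{1}{2A_0}$-close candidate when one exists (otherwise an arbitrary $\delta^{k-1}$-close one), prove uniqueness of the forced parent via the quasi-triangle inequality, and extend by iteration/transitivity. One slip to fix: in your displayed chain you write
\begin{equation*}
\rho(z^{k-1}_{\beta_1}, z^{k-1}_{\beta_2}) \leq A_0 \cdot \tfrac{1}{2A_0}\delta^{k-1} + A_0 \cdot \tfrac{1}{2A_0}\delta^{k-1} = \delta^{k-1},
\end{equation*}
but the non-strict $\leq \delta^{k-1}$ does not contradict the separation property $\rho \geq \delta^{k-1}$ of $Z_{k-1}$. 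Since the hypothesis $\rho(\zka, z^{k-1}_{\beta_i}) < \tfrac{1}{2A_0}\delta^{k-1}$ is strict, the second step of the quasi-triangle estimate should be a strict $<$, yielding $\rho(z^{k-1}_{\beta_1}, z^{k-1}_{\beta_2}) < \delta^{k-1}$, which is the contradiction you need. With that corrected, the argument is complete and matches the paper's.
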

\begin{proof}
By (\ref{zkamaximality}), for each pair $(k,\alpha)$ there exists at least one $\beta \in I_{k-1}$ such that $\rho(\zka,z^{k-1}_\beta) < \delta^{k-1}$. On the other hand, let us show that for the same pair there exists at most one $\beta \in I_{k-1}$ such that $\rho(\zka,z^{k-1}_\beta) < \frac{1}{2 A_0} \delta^{k-1}$: if $z^{k-1}_\gamma$ is another such point, then by the triangle inequality (\ref{quasitriangle})
\begin{eqnarray*}
\rho(z^{k-1}_\beta, z^{k-1}_\gamma) & \leq & A_0 \rho(z^{k-1}_\beta, \zka) + A_0 \rho(\zka, z^{k-1}_\gamma) \\
& < & A_0 \frac{1}{2 A_0} \delta^{k-1} + A_0 \frac{1}{2 A_0} \delta^{k-1} \\
& = & \delta^{k-1},
\end{eqnarray*}
contradicting (\ref{zka}).

The partial order $\preceq$ is constructed in the following way: for each $(k,\alpha)$ check, whether there exists an index $\beta \in I_{k-1}$ such that  $\rho(\zka,z^{k-1}_\beta) < \frac{1}{2 A_0} \delta^{k-1}$. If so, set $(k,\alpha) \preceq (k-1,\beta)$ and $(k,\alpha) \npreceq (k-1,\gamma)$ for all other $\gamma \in I_{k-1}$. If such $\beta$ does not exist, then select some $\beta \in I_{k-1}$ for which $\rho(\zka,z^{k-1}_\beta) < \delta^{k-1}$, and set $(k,\alpha) \preceq (k-1,\beta)$ and $(k,\alpha) \npreceq (k-1,\gamma)$ for all other $\gamma \in I_{k-1}$.

Finally, extend $\preceq$ by reflexivity, i.e. set $(k,\alpha) \preceq (k,\alpha)$ for each $(k,\alpha)$, and by transitivity, i.e. if $(k,\alpha) \preceq (l,\beta)$ and $(l,\beta) \preceq (m,\gamma)$, then set $(k,\alpha) \preceq (m,\gamma)$. Then $\preceq$ becomes a partial order since the last property, antisymmetry, holds by the construction. All four claims (\ref{pogeneration})--(\ref{poparent}) follow directly from the construction.
\end{proof}

The claim (\ref{pogeneration}) in Lemma \ref{potree} means that the partial order $\preceq$ is naturally formed by generations. The claim (\ref{poanchestor}) tells that each index pair $(k,\alpha)$ has a unique ancestor in the generation $l$. Together these claims imply that the partial order forms a tree structure. The claim (\ref{pochildren}) can be interpreted such that points corresponding to a parent and its child in the tree structure are close to each others, and the claim (\ref{poparent}) such that a parent is close to no other children.

In order to define the dyadic cubes, let us fix a partial order $\preceq$ satisfying the conditions in Lemma \ref{potree}. The dyadic cubes consist of balls with those reference points as centers whose index pair is a descendant of the index pair of the cube in the tree structure.  
\begin{definition}[Dyadic cube] \label{Qka}
The dyadic cube of generation $k \in \Z$ with index $\alpha \in I_k$ is
\begin{equation*}
\Qka := \bigcup_{(l,\beta) \preceq (k,\alpha)} B(z^l_\beta, a_0 \delta^l),
\end{equation*}
where $a_0 \in (0,1)$ is a parameter to be determined later.
\end{definition}

Dyadic cubes form the dyadic family of each generation $k$
\begin{equation*}
\D_k := \{\Qka : \alpha \in I_k\}
\end{equation*}
and the family of all dyadic cubes
\begin{equation*}
\D := \bigcup_{k \in \Z} \D_k.
\end{equation*}
The dyadic cubes in Definition \ref{Qka} satisfy many same kinds of properties as the classic dyadic cubes of $\R^n$ when the parameters $\delta$ and $a_0$ in their definitions are chosen to be small enough.
\begin{theorem}[Properties of dyadic cubes] \label{Qexist}
For the family $\D$, there exist constants $\delta \in (0,1)$, $a_0 \in (0,1)$, $\eta > 0$, $C_1 < \infty$, $C_2 < \infty$ and $N_0 \in \N$ depending only on $A_0$ and $A_1$ such that the following claims hold.
\begin{enumerate}[(a)]
\item \label{Qopen} Each $\Qka \in \D$ is open.
\item \label{Qball} Each $\Qka \in \D$ contains a ball $B(z^k_\alpha, a_0 \delta^k)$.
\item \label{Qdiam} For each $\Qka \in \D$ it holds $\diam(Q^k_\alpha) \leq C_1 \delta^k$.
\item \label{Qanchestor} For each $\Qka \in \D$ and $l < k$ there exists a unique $\beta \in I_l$ such that $\Qka \subset \Qlb$.
\item \label{Qtree} If $l \geq k$ and $\alpha \in I_k, \beta \in I_l$, then either $\Qlb \subset \Qka$ or $\Qlb \cap \Qka = \varnothing$.
\item \label{Qchildren} For each $\Qka \in \D$ it holds
\begin{equation*}
\# \{ Q^{k+1}_\beta \in \D_{k+1} : Q^{k+1}_\beta \subset \Qka \} \leq N_0.
\end{equation*}
\item \label{Qfull} For each $k \in \Z$ it holds
\begin{equation*}
\mu(X \setminus \bigcup_{\alpha \in I_k} \Qka) = 0.
\end{equation*}
\item \label{Qboundary} For each $\Qka \in \D$ it holds
\begin{equation*}
\mu(\{x \in \Qka : \rho(x, X \setminus \cl{\Qka}) \leq t \delta^k\}) \leq C_2 t^\eta \mu(\Qka) \quad \text{for each } t > 0,
\end{equation*}
where $\cl{\Qka}$ is the closure of $\Qka$.
\end{enumerate} 
\end{theorem}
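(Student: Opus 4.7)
The plan is to verify the eight claims by exploiting the tree structure from Lemma \ref{potree}. The easy properties come first. Property (\ref{Qopen}) is immediate, since balls are open in a space of homogeneous type and arbitrary unions of open sets are open. Property (\ref{Qball}) follows from reflexivity of $\preceq$: the ball $B(\zka, a_0 \delta^k)$ itself appears in the union defining $\Qka$. Property (\ref{Qanchestor}) combines the uniqueness-of-ancestor statement in Lemma \ref{potree}(\ref{poanchestor}) with transitivity of $\preceq$: every descendant of $(k,\alpha)$ is a descendant of its unique ancestor $(l,\beta)$, so every ball appearing in the union for $\Qka$ already appears in the union for $\Qlb$. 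Property (\ref{Qchildren}) follows from Lemma \ref{assouad}: by Lemma \ref{potree}(\ref{pochildren}) the centers of the children satisfy $\rho(z^{k+1}_\beta, \zka) < \delta^k$, so they lie in $B(\zka, \delta^k)$ and are $\delta^{k+1}$-separated by (\ref{zka}); applying Lemma \ref{assouad} with $K = 1/\delta$ produces the required bound $N_0$.

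The core technical step is the diameter bound (\ref{Qdiam}). I would first establish the center estimate $\rho(z^l_\beta, \zka) \leq C \delta^k$ uniformly for every descendant $(l,\beta) \preceq (k,\alpha)$. The chain of parents, via Lemma \ref{potree}(\ref{pochildren}), yields consecutive distances bounded by $\delta^{j-1}$ at each step. One then iterates the quasi-triangle inequality, choosing $\delta$ small enough relative to $A_0$ so that the geometric decrease of the radii beats the amplification by $A_0$ inherent to a quasi-metric; the diameter bound itself follows from two further applications of the quasi-triangle inequality, combined with the trivial estimate on the radii $a_0 \delta^l$ of the descendant balls. Once (\ref{Qdiam}) is available, the tree property (\ref{Qtree}) reduces to showing that different cubes of the same generation are disjoint: by Lemma \ref{potree}(\ref{poanchestor}) any $(l,\beta)$ with $l \geq k$ has a unique ancestor in generation $k$, so it suffices to rule out $\Qka \cap \Qkb \neq \varnothing$ for $\alpha \neq \beta$. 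A common point in the intersection would produce two descendant balls whose ancestor chains reach two different $\delta^k$-separated points in $Z_k$; plugging (\ref{Qdiam}) and the quasi-triangle inequality into this configuration contradicts (\ref{zka}) once $a_0$ is chosen sufficiently small.

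Property (\ref{Qfull}) then follows from (\ref{Qboundary}): the complement $X \setminus \bigcup_\alpha \Qka$ is contained, for every $\alpha$ with nontrivial overlap, in the boundary-type set appearing in (\ref{Qboundary}) for arbitrarily small $t$, so Lebesgue's theorem on differentiation forces the complement to have measure zero. The main obstacle is claim (\ref{Qboundary}). I would decompose the boundary strip $\{x \in \Qka : \rho(x, X \setminus \cl{\Qka}) \leq t \delta^k\}$ by generation: a point in this strip lies in some descendant sub-cube $Q^l_\gamma \subset \Qka$ whose closure meets $X \setminus \Qka$, and sits near the boundary of that sub-cube at its own scale. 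At each generation $l \geq k$, the doubling condition (\ref{doubling}) together with (\ref{Qball}), (\ref{Qdiam}) and (\ref{Qchildren}) force a fixed fraction of the measure of each sub-cube to lie strictly in its interior, so the boundary strip contracts by a factor $\theta \in (0,1)$ per generation. Summing the resulting geometric series yields the claimed power-law decay with $\eta = \log(1/\theta)/\log(1/\delta)$. The difficulty lies in choosing $\delta$ and $a_0$ simultaneously so that the geometric contraction really closes at each generation and so that all the previous claims survive the parameter choices.
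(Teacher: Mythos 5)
The outline for claims (\ref{Qopen})--(\ref{Qchildren}) matches the paper's approach: the center estimate $\rho(\zlb,\zka) \leq C\delta^k$ via iterated quasi-triangle inequality with $\delta < 1/(2A_0)$ is exactly Lemma \ref{pochildrengeneral}, the diameter bound and same-generation disjointness follow as you describe, and the Assouad bound for (\ref{Qchildren}) is correct.

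The serious problem is your treatment of (\ref{Qfull}). You reverse the paper's dependency: you propose to derive (\ref{Qfull}) from (\ref{Qboundary}), whereas the paper establishes (\ref{Qfull}) first and then uses it essentially throughout the proof of (\ref{Qboundary}) (the set $\hat{X}$ and Lemma \ref{Qballstronger} both presuppose it). Worse, the derivation you sketch is not just in the wrong order, it is logically broken: the set $\{x \in \Qka : \rho(x,X\setminus\cl{\Qka}) \leq t\delta^k\}$ is by definition a \emph{subset} of $\Qka$, and is therefore disjoint from $X\setminus\bigcup_\alpha\Qka$, so the complement cannot be ``contained in the boundary-type set'' for any $t$. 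The paper instead proves (\ref{Qfull}) directly: for every $x$ and every $n \geq k$ maximality gives a nearby center $z^n_\beta$ with $\rho(x,z^n_\beta) < \delta^n$; the ball $B(z^n_\beta,a_0\delta^n)$ sits inside $G := \bigcup_\alpha\Qka$ and inside $B(x,A_0(1+a_0)\delta^n)$, and the doubling property gives a uniform density lower bound $\mu(G\cap B(x,r_n))/\mu(B(x,r_n)) \geq c > 0$; Lebesgue differentiation of $\chi_G$ then forces $\chi_G = 1$ a.e. Your sketch of (\ref{Qboundary}) is also too thin to stand on its own: the ``fixed fraction of measure stays strictly interior'' claim is not a direct consequence of (\ref{Qball}), (\ref{Qdiam}), (\ref{Qchildren}) --- the paper needs the strengthened interior ball Lemma \ref{Qballstronger} (which requires $\hat{X}$, hence (\ref{Qfull})), plus careful separation lemmas for centers along the chain of ancestors, before the geometric contraction closes. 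You should prove (\ref{Qfull}) first, independently, and rebuild the proof of (\ref{Qboundary}) on top of it.
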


The claims (\ref{Qball}) and (\ref{Qdiam}) in Theorem \ref{Qexist} tell that a dyadic cube contains a ball and, on the other hand, is contained in a ball whose radius is exponentially proportional to the index determining the generation of the cube. In other words, a dyadic cube is bounded by balls of the same magnitude both from inside and outside. Together with the property (\ref{doublingfinite}) of a doubling measure $\mu$ it implies that dyadic cubes have strictly positive and finite measure. The claims (\ref{Qanchestor}) and (\ref{Qtree}) mean that dyadic cubes form a natural tree structure determined by generations. The claim (\ref{Qchildren}) tells that the number of children of dyadic cubes has a common upper bound in the family tree. The claim (\ref{Qfull}), instead, means that each generation of dyadic cubes covers the whole space excluding a set of measure zero, and the claim (\ref{Qboundary}) that the measure of a dyadic cube is not accumulated close to its boundary. The last claim can be used mainly only when dealing with singular integral operators.

The values of parameters $\delta$ and $a_0$ affect the proof of Theorem \ref{Qexist} from its start to finish. Thus, they will not be fixed until the last claim of the theorem has been proved. However, the values are limited by constraints $\delta \in (0,\delta')$ and $a_0 \in (0, a_0')$ during the proof, where $\delta'$ and $a_0'$ depend only on $A_0$ and $A_1$. There will be finitely many such constraints, and thus the final upper bounds will be the minimums of the corresponding single ones. In most lemmas used in supporting the proof, we have to interpret that the lemmas hold providing that $\delta$ and $a_0$ are small enough even if it is not explicitly mentioned in the formulation of the lemmas.

We move to prove the claims in Theorem \ref{Qexist} one by one. The claim (\ref{Qopen}) follows directly from Definition \ref{Qka}: balls $B(\zlb,a_0 \delta^l)$ are open in the space of homogeneous type, and thus $\Qka$ is open as a union of open sets. Similarly, the claim (\ref{Qball}) follows directly from the definition because of the reflexivity property $(k,\alpha) \preceq (k,\alpha)$ of the partial order.

For the next claim, we need a weakened generalization of the property (\ref{potree}\ref{pochildren}) of the partial order $\preceq$, which gives an upper bound how far the center of a dyadic cube can be from the center of its ancestor.
\begin{lemma} \label{pochildrengeneral}
If $(l,\beta) \preceq (k,\alpha)$, then $\rho(\zlb,\zka) \leq 2 A_0 \delta^k$.
\end{lemma}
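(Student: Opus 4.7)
The plan is to induct on the generation gap $n := l - k \geq 0$, keeping the constant $2A_0$ \emph{fixed} throughout the induction; allowing it to drift is fatal, because the quasi-triangle inequality loses a factor of $A_0$ at each application.

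The base case $n = 0$ is immediate: reflexivity and the uniqueness clause of Lemma \ref{potree}(\ref{poanchestor}) force $\beta = \alpha$, so both sides vanish. For the inductive step the key choice is to peel off one generation from the ancestor side, not from the descendant side. Concretely, use Lemma \ref{potree}(\ref{poanchestor}) applied at level $k+1 \leq l$ to obtain the unique $\gamma \in I_{k+1}$ with $(l,\beta) \preceq (k+1,\gamma)$; transitivity together with the uniqueness of the ancestor at level $k$ of $(k+1,\gamma)$ then forces $(k+1,\gamma) \preceq (k,\alpha)$, so Lemma \ref{potree}(\ref{pochildren}) yields $\rho(z^{k+1}_\gamma, \zka) < \delta^k$. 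The inductive hypothesis applied to the shorter chain $(l,\beta) \preceq (k+1,\gamma)$ gives $\rho(\zlb, z^{k+1}_\gamma) \leq 2A_0 \delta^{k+1}$. The quasi-triangle inequality (\ref{quasitriangle}) then produces
\begin{equation*}
\rho(\zlb,\zka) \leq A_0 \cdot 2A_0 \delta^{k+1} + A_0 \cdot \delta^k = \bigl(2A_0^2 \delta + A_0\bigr)\delta^k,
\end{equation*}
which is bounded by $2A_0 \delta^k$ precisely when $\delta \leq 1/(2A_0)$. This contributes one further smallness constraint on $\delta$ to those accumulated in the course of proving Theorem \ref{Qexist}.

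The main obstacle is the asymmetric induction direction, and it is dictated by the arithmetic above. If one peels from the descendant side instead, the small extra step $\rho(\zlb, z^{l-1}_{\gamma'}) < \delta^{l-1}$ sits outside the induction while the inductive term contributes $A_0 \cdot 2A_0 \delta^k$, whose coefficient $2A_0^2 \geq 2A_0$ cannot be absorbed by taking $\delta$ small. Peeling from the ancestor side instead places the single large step $A_0 \delta^k$ outside the induction and pushes the entire tail of small steps inside; since the induction hypothesis then delivers $\delta^{k+1}$ rather than $\delta^k$, the extra factor $\delta$ is available to swallow the $A_0^2$ and restore the uniform constant $2A_0$.
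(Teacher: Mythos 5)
Your proof is correct, and it is structurally the same argument as the paper's: both decompose the ancestor chain at $(k+1,\gamma)$, apply property (\ref{potree}\ref{pochildren}) to the single large step $\rho(z^{k+1}_\gamma, \zka) < \delta^k$, handle the remaining tail, and impose the same constraint $\delta < \frac{1}{2A_0}$. The difference is purely in packaging. The paper unrolls the chain completely, writing the telescoping estimate
\begin{equation*}
\rho(\zka,\zlb) \leq A_0 \delta^k + A_0^2 \delta^{k+1} + A_0^3 \delta^{k+2} + \cdots ,
\end{equation*}
and then dominates this by the geometric series $A_0 \delta^k \sum_{j \geq 0}(A_0\delta)^j = \frac{A_0\delta^k}{1 - A_0\delta} \leq 2A_0\delta^k$. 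You instead fold the whole tail into a single application of the inductive hypothesis at level $k+1$, getting $\rho(\zlb, z^{k+1}_\gamma) \leq 2A_0\delta^{k+1}$ and then closing the estimate with one triangle inequality; the condition $2A_0^2\delta + A_0 \leq 2A_0$ is exactly the condition $\frac{1}{1-A_0\delta}\leq 2$ in disguise, so the two routes produce identical constraints. Your explicit remark on why peeling from the descendant side fails is a worthwhile observation that the paper leaves implicit: the chain decomposition in the paper's proof also begins at the ancestor end, and for the same reason you give — the one step that contributes at scale $\delta^k$ must be isolated outside the geometric tail, otherwise the accumulated $A_0$ factor on that term cannot be compensated by shrinking $\delta$. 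One small point of rigor in your write-up: you should note, as the paper does, that the inductive step's deduction $(k+1,\gamma)\preceq(k,\alpha)$ also uses the uniqueness in (\ref{potree}\ref{poanchestor}) for $(l,\beta)$ at level $k$, not only for $(k+1,\gamma)$ — but this is exactly what you meant by "transitivity together with the uniqueness of the ancestor," so the argument is sound.
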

\begin{proof}
Assume $(l,\beta) \preceq (k,\alpha)$. Then there exists a unique chain
\begin{equation*}
(k,\alpha) = (k,\gamma_0) \succeq (k+1,\gamma_1) \succeq (k+2,\gamma_2) \succeq \dots \succeq (k+n,\gamma_n) = (l,\beta)
\end{equation*}
by Lemma \ref{potree}. By estimating with the triangle inequality (\ref{quasitriangle}) and the implication (\ref{potree}\ref{pochildren}), we get
\begin{eqnarray*}
\rho(\zka,\zlb) & \leq & A_0 \rho(\zka,z^{k+1}_{\gamma_1}) + A_0 \rho(z^{k+1}_{\gamma_1}, \zlb) \\
& \leq & A_0 \delta^k + A_0 \rho(z^{k+1}_{\gamma_1}, \zlb) \\
& \leq & A_0 \delta^k + A_0^2 \rho(z^{k+1}_{\gamma_1},z^{k+2}_{\gamma_2}) + A_0^2 \rho(z^{k+2}_{\gamma_2},\zlb) \\
& \leq & A_0 \delta^k + A_0^2 \delta^{k+1} + A_0^2 \rho(z^{k+2}_{\gamma_2},\zlb) \\
& \vdots & \\
& \leq & A_0 \delta^k + A_0^2 \delta^{k+1} + A_0^3 \delta^{k+2} + \dots + A_0^{n-1} \delta^{k+n-2} + A_0^{n-1} \delta^{k+n-1} \\
& \leq & A_0 \delta^k \sum_{j=0}^\infty (A_0 \delta)^j = \frac{A_0 \delta^k}{1 - A_0 \delta} \\
& \leq & 2 A_0 \delta^k,
\end{eqnarray*}
where $\delta$ has been chosen to be smaller than $\frac{1}{2 A_0}$. This choice also ensures that the geometric series in the second line from the bottom converges.
\end{proof}

Let us prove the claim (\ref{Qdiam}) in Theorem \ref{Qexist}. Let $x,y \in \Qka$. Then $x \in B(\zlb, a_0 \delta^l)$ and $y \in B(z^m_\gamma, a_0 \delta^m)$ for some $(l,\beta), (m,\gamma) \preceq (k,\alpha)$ by Definition \ref{Qka}. Hence
\begin{eqnarray*}
\rho(x,y) & \leq & A_0 \rho(x, z^l_\beta) + A_0^2 \rho(z^l_\beta, z^k_\alpha) + A_0^3 \rho(z^k_\alpha, z^m_\gamma) + A_0^3 \rho(z^m_\gamma, y) \\
& \leq & A_0 a_0 \delta^l + A_0^2 2 A_0 \delta^k + A_0^3 2 A_0 \delta^k + A_0^3 a_0 \delta^m \\
& \leq & A_0 1 \delta^k + A_0^2 2 A_0 \delta^k + A_0^3 2 A_0 \delta^k + A_0^3 1 \delta^k \\
& = & (A_0 + 3 A_0^3 + 2 A_0^4) \delta^k \\
& = & C_1 \delta^k.
\end{eqnarray*}
In the first inequality, we have used the triangle inequality (\ref{quasitriangle}) three times. In the second one, we have used Lemma \ref{pochildrengeneral} and the information that $x \in B(\zlb, a_0 \delta^l)$ and $y \in B(z^m_\gamma, a_0 \delta^m)$. In the last inequality, in turn, $a_0 \leq 1$ and $l,m \geq k$. By taking the supremum over the set $\{(x,y) : x,y \in \Qka\}$ of both sides of the derived inequality $\rho(x,y) \leq C_1 \delta^k$, we get the desired inequality $\diam(Q^k_\alpha) \leq C_1 \delta^k$. The claim (\ref{Qexist}\ref{Qdiam}) directly implies that $\Qka \subset B(\zka, C_1 \delta^k)$.

Before proving the next claim, we need a lemma telling that free parameters can be restricted such that dyadic cubes of the same generation become disjoint.
\begin{lemma} \label{QkaQkb}
If $\Qka \cap Q^k_\beta \neq \varnothing$ then $\alpha = \beta$.
\end{lemma}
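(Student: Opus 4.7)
The plan is to argue by contradiction. Suppose that $Q^k_\alpha \cap Q^k_\beta \neq \varnothing$ with $\alpha \neq \beta$, and pick a witness $x$ in the intersection. Unfolding Definition \ref{Qka}, there must exist descendants $(l,\gamma) \preceq (k,\alpha)$ and $(m,\eta) \preceq (k,\beta)$ such that $x \in B(z^l_\gamma, a_0 \delta^l) \cap B(z^m_\eta, a_0 \delta^m)$. The goal is then to show that this forces $z^k_\alpha$ and $z^k_\beta$ to be so close that the defining separation property (\ref{zka}) of the set $Z_k$ is violated.

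To control the relevant distances, I would invoke Lemma \ref{pochildrengeneral} to obtain $\rho(z^k_\alpha, z^l_\gamma) \leq 2A_0 \delta^k$ and $\rho(z^k_\beta, z^m_\eta) \leq 2A_0 \delta^k$. Since (\ref{potree}\ref{pogeneration}) gives $l, m \geq k$ and $\delta \in (0,1)$, the ball radii satisfy $a_0 \delta^l \leq a_0 \delta^k$ and $a_0 \delta^m \leq a_0 \delta^k$, so the membership of $x$ in the two balls also contributes only terms of order $\delta^k$.

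Next I would iterate the quasi-triangle inequality (\ref{quasitriangle}) along the chain $z^k_\alpha \to z^l_\gamma \to x \to z^m_\eta \to z^k_\beta$ and substitute the four estimates above. After collecting powers of $A_0$, this produces a bound of the shape
\begin{equation*}
\rho(z^k_\alpha, z^k_\beta) \leq P(A_0)\, \delta^k + Q(A_0)\, a_0 \delta^k,
\end{equation*}
where $P$ and $Q$ are fixed polynomials coming from the number of times $A_0$ is picked up in the telescoping.

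Finally, I would choose $\delta$ and $a_0$ small enough (depending only on $A_0$) so that $P(A_0)\,\delta + Q(A_0)\, a_0 < 1$; this forces $\rho(z^k_\alpha, z^k_\beta) < \delta^k$, contradicting (\ref{zka}) since $\alpha \neq \beta$. The main obstacle is purely bookkeeping, namely correctly tracking how many factors of $A_0$ accumulate in the iterated triangle inequality; the smallness of $\delta$ and $a_0$ required is harmless and simply joins the finite list of such constraints being collected throughout the proof of Theorem \ref{Qexist}.
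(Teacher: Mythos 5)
Your strategy fails at the final step, and the failure is structural rather than a matter of bookkeeping. The bound you obtain has the form $\rho(\zka,\zkb) \leq P(A_0)\,\delta^k + Q(A_0)\,a_0\delta^k$ where $P(A_0)$ multiplies $\delta^k$ itself, not $\delta\cdot\delta^k$: tracing your chain gives roughly $P(A_0) = 2A_0^2 + 2A_0^4 \geq 4$, coming from the two applications of Lemma \ref{pochildrengeneral}. Shrinking $\delta$ does nothing to this term, because $2A_0\delta^k$ scales exactly like $\delta^k$. This is not an artifact of a lossy estimate: already the first step of a descendant chain, property (\ref{potree}\ref{pochildren}), allows $\rho(z^{k+1}_{\gamma_1},\zka)$ to be nearly $\delta^k$, so the centers of descendants genuinely drift by an amount comparable to the separation scale $\delta^k$ of $Z_k$. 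Consequently two distinct centers $\zka,\zkb$ at distance only slightly more than $\delta^k$ could in principle both own descendants whose small balls meet, and no contradiction with (\ref{zka}) at generation $k$ is available. Indeed, since $\alpha\neq\beta$ forces $\rho(\zka,\zkb)\geq\delta^k$ while your upper bound is at best about $4\delta^k$, the two are perfectly compatible.

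The missing idea is to derive the contradiction from the tree structure of $\preceq$ rather than from the separation of $Z_k$ at generation $k$. The paper's proof takes $x\in B(z^m_\gamma,a_0\delta^m)\cap B(z^n_\sigma,a_0\delta^n)$ with $(m,\gamma)\preceq(k,\alpha)$, $(n,\sigma)\preceq(k,\beta)$ and $m\geq n$, and notes $\rho(z^m_\gamma,z^n_\sigma)\leq 2A_0a_0\delta^n$ --- which, unlike your bound, \emph{is} small relative to the separation scale $\delta^n$ of the deeper generation once $a_0<\frac{1}{2A_0}$. If $m=n$ this forces $(m,\gamma)=(n,\sigma)$; if $m>n$ it places the generation-$(n+1)$ ancestor $(n+1,\tau)$ of $(m,\gamma)$ within $\frac{1}{2A_0}\delta^n$ of $z^n_\sigma$, so property (\ref{potree}\ref{poparent}) forces $(n+1,\tau)\preceq(n,\sigma)$. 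Either way $(m,\gamma)$ is a common descendant of $(k,\alpha)$ and $(k,\beta)$, and uniqueness of the generation-$k$ ancestor, property (\ref{potree}\ref{poanchestor}), gives $\alpha=\beta$. You would need to replace your concluding step with an argument of this kind; the distance estimate alone cannot close the proof.
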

\begin{proof}
Let $x \in \Qka \cap Q^k_\beta$. Then there exist pairs $(m,\gamma) \preceq (k,\alpha)$ and $(n,\sigma) \preceq (k,\beta)$ such that $x \in B(z^m_\gamma, a_0 \delta^m)$ and $x \in B(z^n_\sigma, a_0 \delta^n)$ by Definition \ref{Qka}. Without loss of generality, we may assume $m \geq n$. Then by the triangle inequality (\ref{quasitriangle}), we get
\begin{eqnarray}
\nonumber \rho(z^m_\gamma, z^n_\sigma) & \leq & A_0 \rho(z^m_\gamma, x) + A_0 \rho(x, z^n_\sigma) \\
\label{zmgzns} & \leq & A_0 a_0 \delta^m + A_0 a_0 \delta^n \\
\nonumber & \leq & 2 A_0 a_0 \delta^n.
\end{eqnarray}

Let us consider two cases. If $m = n$ the previous inequality (\ref{zmgzns}) becomes $\rho(z^n_\gamma, z^n_\sigma) < \delta^n$ when we have chosen $a_0 < \frac{1}{2 A_0}$. This implies that $\gamma = \sigma$, i.e. $(m,\gamma) = (n,\sigma)$, by the choice of $Z_n$ (\ref{zka}). Now the pairs $(m,\gamma)$ and $(n,\sigma)$ have the same unique ancestor in the generation $k$ by the property (\ref{potree}\ref{poanchestor}) of the partial order $\preceq$. Thus $\alpha = \beta$.

On the other hand, if $m > n$ there exists a unique $z^{n+1}_\tau$ such that $(m,\gamma) \preceq (n+1,\tau)$ by the property (\ref{potree}\ref{poanchestor}) of the partial order $\preceq$. By the triangle inequality (\ref{quasitriangle}), Lemma \ref{pochildrengeneral} and the inequality (\ref{zmgzns}), we get
\begin{eqnarray*}
\rho(z^{n+1}_\tau, z^n_\sigma) & \leq & A_0 \rho(z^{n+1}_\tau, z^m_\gamma) + A_0 \rho(z^m_\gamma, z^n_\sigma) \\
& \leq & A_0 2 A_0 \delta^{n+1} + A_0 2 A_0 a_0 \delta^n \\
& = & 2 A_0^2 (\delta + a_0) \delta^n \\
& < & \frac{1}{2 A_0} \delta^n,
\end{eqnarray*}
where we have chosen $\delta$ and $a_0$ to be smaller than $\frac{1}{8 A_0^3}$. Now, the property (\ref{potree}\ref{poparent}) of the partial order $\preceq$ implies $(n+1,\tau) \preceq (n,\sigma)$, and thus we get a chain
\begin{equation*}
(m,\gamma) \preceq (n+1,\tau) \preceq (n,\sigma) \preceq (k,\beta).
\end{equation*}
Because it also holds $(m,\gamma) \preceq (k,\alpha)$, we again deduce that $\alpha = \beta$ by the property (\ref{potree}\ref{poanchestor}).
\end{proof}

Next we prove the claim (\ref{Qtree}) in Theorem \ref{Qexist}. Let $l \geq k$ and $\Qlb \cap \Qka \neq \varnothing$. Choose $\gamma$ by the property (\ref{potree}\ref{poanchestor}) of the partial order $\preceq$ such that $(l,\beta) \preceq (k,\gamma)$. Then $\Qlb \subset Q^k_\gamma$ because of Definition \ref{Qka} and transitivity of $\preceq$. Thus, it also holds $Q^k_\gamma \cap \Qka \neq \varnothing$ which implies $\gamma = \alpha$ by Lemma \ref{QkaQkb}. Hence $\Qlb \subset \Qka$. On the other hand, if $l \geq k$ and $\Qlb \cap \Qka = \varnothing$, the case $\Qlb \subset \Qka$ is impossible since $\Qlb \neq \varnothing$. Thus, the claim (\ref{Qexist}\ref{Qtree}) has been proved.

By combining Lemma \ref{QkaQkb} and the property (\ref{Qexist}\ref{Qtree}) of dyadic cubes just proved, we get a clear connection between the partial order $\preceq$ and the dyadic family $\D$.
\begin{lemma} \label{connection}
Suppose $l \geq k$ and $\Qka, \Qlb \in \D$. Then
\begin{eqnarray}
\label{connection1} (l,\beta) \preceq (k,\alpha) & \text{if and only if} & \Qlb \subset \Qka, \quad \text{and} \\
\label{connection2} (l,\beta) \npreceq (k,\alpha) & \text{if and only if} & \Qlb \cap \Qka = \varnothing.
\end{eqnarray}
\end{lemma}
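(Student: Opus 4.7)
The plan is to derive both equivalences directly from Definition \ref{Qka}, the transitivity of $\preceq$, the disjointness Lemma \ref{QkaQkb}, and the tree property (\ref{Qexist}\ref{Qtree}) that was just proved. The key observation that makes everything trivial is that every dyadic cube is non-empty: $\Qlb \supset B(\zlb, a_0\delta^l)$ by reflexivity of $\preceq$, so $\Qlb \neq \varnothing$.

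For (\ref{connection1}), I would first handle the forward direction. If $(l,\beta) \preceq (k,\alpha)$, then any pair $(m,\gamma) \preceq (l,\beta)$ satisfies $(m,\gamma) \preceq (k,\alpha)$ by transitivity, so every ball $B(z^m_\gamma, a_0\delta^m)$ appearing in the union defining $\Qlb$ also appears in the union defining $\Qka$; hence $\Qlb \subset \Qka$. For the converse, assume $\Qlb \subset \Qka$. Invoking property (\ref{potree}\ref{poanchestor}), pick the unique $\gamma \in I_k$ with $(l,\beta) \preceq (k,\gamma)$. By the forward direction just established, $\Qlb \subset Q^k_\gamma$. Since $\Qlb$ is non-empty, $Q^k_\gamma \cap \Qka \supset \Qlb \neq \varnothing$, so Lemma \ref{QkaQkb} forces $\gamma = \alpha$, giving $(l,\beta) \preceq (k,\alpha)$.

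For (\ref{connection2}), I would argue by contrapositives using (\ref{connection1}) and (\ref{Qexist}\ref{Qtree}). If $(l,\beta) \npreceq (k,\alpha)$, then by (\ref{connection1}) we cannot have $\Qlb \subset \Qka$; the tree property (\ref{Qexist}\ref{Qtree}) then leaves only the alternative $\Qlb \cap \Qka = \varnothing$. Conversely, if $\Qlb \cap \Qka = \varnothing$, then since $\Qlb \neq \varnothing$ we cannot have $\Qlb \subset \Qka$, and by (\ref{connection1}) this forces $(l,\beta) \npreceq (k,\alpha)$.

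There is no real obstacle here; the lemma is essentially a bookkeeping statement that organises the content of Lemma \ref{QkaQkb} and (\ref{Qexist}\ref{Qtree}) into a clean dictionary between the combinatorial relation $\preceq$ on index pairs and the set-theoretic containment relation on cubes. The only point worth explicitly flagging is the non-emptiness of $\Qlb$, which is what rules out the degenerate possibility of $\Qlb \subset \Qka$ holding vacuously when $\Qlb \cap \Qka = \varnothing$.
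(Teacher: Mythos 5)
Your proposal is correct and follows essentially the same route as the paper: the forward direction of (\ref{connection1}) from Definition \ref{Qka} and transitivity, the converse via the unique ancestor from (\ref{potree}\ref{poanchestor}) together with Lemma \ref{QkaQkb} and the non-emptiness of $\Qlb$, and (\ref{connection2}) by negating (\ref{connection1}) and invoking (\ref{Qexist}\ref{Qtree}). The only difference is that you spell out the forward direction and the non-emptiness observation slightly more explicitly than the paper does.
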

\begin{proof}
We prove the claim (\ref{connection1}) first. Suppose that $\Qlb \subset \Qka$. By the property (\ref{potree}\ref{poanchestor}) of the partial order $\preceq$, there exists a unique $\gamma \in I_k$ such that $(l,\beta) \preceq (k,\gamma)$. Then Definition \ref{Qka} and transitivity of $\preceq$ imply that $\Qlb \subset Q^k_\gamma$. Because now $\Qlb \subset \Qka \cap Q^k_\gamma$ and $\Qlb \neq \varnothing$, it holds $\Qka \cap Q^k_\gamma \neq \varnothing$, and thus $\alpha = \gamma$ by Lemma \ref{QkaQkb}. Hence $(l,\beta) \preceq (k,\alpha)$. The other direction follows directly from Definition \ref{Qka} and transitivity of the partial order $\preceq$.

The latter claim (\ref{connection2}) follows from the former one in a straightforward way; By negating (\ref{connection1}) we get that $(l,\beta) \npreceq (k,\alpha)$ if and only if $\Qlb \not \subset \Qka$. The latter proposition is true if and only if $\Qlb \cap \Qka = \varnothing$ by the property (\ref{Qexist}\ref{Qtree}) of dyadic cubes, and thus the claim has been proved.
\end{proof}

Let us prove the claim (\ref{Qanchestor}) in Theorem \ref{Qexist}. Let $\Qka \in \D$ and $l < k$. The property (\ref{potree}\ref{poanchestor}) of the partial order $\preceq$ gives a unique index $\beta \in I_l$ such that $(k,\alpha) \preceq (l,\beta)$. Then $\Qka \subset \Qlb$ holds exactly with that $\beta$ by the part (\ref{connection1}) of Lemma \ref{connection}, which proves the claim (\ref{Qexist}\ref{Qanchestor}).

Next we prove the claim (\ref{Qchildren}) in Theorem \ref{Qexist}. Let us consider the dyadic cube $\Qka \in \D$. For the center set $Z_{k+1} \subset X$, it holds
\begin{equation*}
\rho(z^{k+1}_\beta, z^{k+1}_\gamma) \geq \delta^{k+1} \quad \text{when } z^{k+1}_\beta, z^{k+1}_\gamma \in Z_{k+1}, z^{k+1}_\beta \neq z^{k+1}_\gamma
\end{equation*}
by its property (\ref{zka}). Thus, there exists a constant $N_0 \in \N$ such that $Z_{k+1}$ contains at most $N_0$ points in the ball $B(z^k_\alpha, (C_1 \delta^{-1}) \delta^{k+1}) = B(z^k_\alpha, C_1 \delta^k)$ by Lemma \ref{assouad}. Here $N_0$ depends only on the constants $A_0$ and $A_1$, as long as $\delta$ depends only on them. Because $\Qka \subset B(z^k_\alpha, C_1 \delta^k)$ by the property (\ref{Qexist}\ref{Qdiam}) of dyadic cubes, $Z_{k+1}$ also contains at most $N_0$ points in the cube $\Qka$. Hence, by noticing the property (\ref{Qexist}\ref{Qtree}) of dyadic cubes, it holds
\begin{equation*}
\# \{ Q^{k+1}_\beta \in \D_{k+1} : Q^{k+1}_\beta \subset \Qka \} = \# \{ z^{k+1}_\beta \in Z_{k+1} : z^{k+1}_\beta \in \Qka \} \leq N_0
\end{equation*}
and the claim (\ref{Qexist}\ref{Qanchestor}) has been proved.

Next we move to prove the claim (\ref{Qfull}) in Theorem \ref{Qexist}. Fix $k \in \Z$ and denote
\begin{equation*}
G := \bigcup_{\alpha \in I_k} \Qka.
\end{equation*}
For any $x \in X$ and $n \in \Z$, there exists $z^n_\beta \in Z_n$ such that $\rho(x,z^n_\beta) < \delta^n$ by (\ref{zkamaximality}). When $n \geq k$, there exists $\alpha \in I_k$ such that $(n,\beta) \preceq (k,\alpha)$ by the property (\ref{Qexist}\ref{Qanchestor}) of dyadic cubes, and thus $B(z^n_\beta, a_0 \delta^n) \subset G$ by Definition \ref{Qka}. Let us show that it also holds $B(z^n_\beta, a_0 \delta^n) \subset B(x, A_0 (1+a_0) \delta^n)$: let $y \in B(z^n_\beta, a_0 \delta^n)$, which together with the triangle inequality (\ref{quasitriangle}) implies
\begin{eqnarray*}
\rho(x,y) & \leq & A_0 \rho(x,z^n_\beta) + A_0 \rho(z^n_\beta, y) \\
& < & A_0 \delta^n + A_0 a_0 \delta^n \\
& = & A_0 (1+a_0) \delta^n.
\end{eqnarray*}
Hence $y \in B(x, A_0 (1+a_0) \delta^n)$.

Next we show that also the opposite relation holds for the measure $\mu$, i.e.
\begin{equation*}
\mu(B(z^n_\beta, a_0 \delta^n)) \geq c \mu \big( B(x, A_0 (1+a_0) \delta^n) \big),
\end{equation*}
where $c > 0$ is a constant depending only on $A_0$ and $A_1$. By using the doubling property (\ref{doubling}) $d$ times, we get
\begin{equation*}
\mu(B(z^n_\beta, 2^d a_0 \delta^n)) \leq A_1^d \mu(B(z^n_\beta, a_0 \delta^n)).
\end{equation*}
Choose $d$ such that $2^d a_0 \geq A_0^2(1 + a_0) + A_0$, and let $y \in B(x, A_0(1+a_0) \delta^n)$. Now, by applying the choice of $z^n_\beta$ and the triangle inequality (\ref{quasitriangle}), we get
\begin{eqnarray*}
\rho(y, z^n_\beta) & \leq & A_0 \rho(y,x) + A_0 \rho(x,z^n_\beta) \\
& < & A_0 A_0(1 + a_0) \delta^n + A_0 \delta^n \\
& = & (A_0^2(1+a_0) + A_0) \delta^n \\
& \leq & 2^d a_0 \delta^n,
\end{eqnarray*}
i.e. $y \in B(z^n_\beta, 2^d a_0 \delta^n)$. Hence $B(x, A_0(1+a_0) \delta^n) \subset B(z^n_\beta, 2^d a_0 \delta^n)$. Thus, by monotonicity and the doubling property of $\mu$, we get
\begin{eqnarray*}
\mu(B(z^n_\beta, a_0 \delta^n)) & \geq & \frac{1}{A_1^d} \mu(B(z^n_\beta, 2^d a_0 \delta^n)) \\
& \geq & \frac{1}{A_1^d} \mu \big( B(x, A_0(1+a_0) \delta^n) \big) \\
& = & c \mu \big( B(x, A_0(1+a_0) \delta^n) \big),
\end{eqnarray*}
where the constant $c$ depends only on $A_0$ and $A_1$, as long as the parameter $a_0$ also depends only on them.

For the rest of the proof, denote $A_0 (1+a_0) \delta^n =: r_n$. By combining the inclusions $B(z^n_\beta, a_0 \delta^n) \subset G$ and $B(z^n_\beta, a_0 \delta^n) \subset B(x, r_n)$ and monotonicity of $\mu$, we get
\begin{equation*}
\mu(B(z^n_\beta, a_0 \delta^n)) \leq \mu(G \cap B(x, r_n)).
\end{equation*}
By including also the inequality $c \mu(B(x, r_n)) \leq \mu(B(z^n_\beta, a_0 \delta^n))$ and noticing that the only thing we assumed about $n$ was $n \geq k$, it follows
\begin{equation*}
\frac{\mu(G \cap B(x, r_n))}{\mu(B(x, r_n))} \geq c > 0 \quad \text{for each } n \geq k.
\end{equation*}
By taking the limes inferior of both sides as $n \to \infty$ and noticing that $x \in X$ was arbitrary, we get
\begin{equation} \label{Epositive}
\liminf_{r \to 0} \frac{\mu(G \cap B(x,r))}{\mu(B(x,r))} \geq c > 0 \quad \text{for each } x \in X.
\end{equation}
On the other hand, by choosing $f$ to be the charachteristic function $\chi_G$ in Lebesgue's theorem on differentiation \ref{lebesgue}, we get the equation
\begin{equation} \label{Echaracteristic}
\lim_{r \to 0} \frac{\mu(G \cap B(x,r))}{\mu(B(x,r))} = \chi_G(x) \quad \text{for almost every } x \in X.
\end{equation}
Combining the limit results (\ref{Epositive}) and (\ref{Echaracteristic}) implies that for almost every $x \in X$ it holds $\chi_G(x) = 1$. Thus $\mu(X \setminus G) = 0$. Hence
\begin{equation*}
\mu(X \setminus \bigcup_{\alpha \in I_k} \Qka) = 0
\end{equation*}
for any $k \in \Z$ and the claim (\ref{Qexist}\ref{Qfull}) has been proved.

By the property (\ref{Qexist}\ref{Qfull}) of dyadic cubes just proved, the sets
\begin{equation*}
N_k := X \setminus \bigcup_{\alpha \in I_k} \Qka
\end{equation*}
are null sets, i.e. for every generation $k$ the family of cubes $\D_k$ covers the space $X$ excluding a set of measure zero. Also $\mu(\bigcup_{k \in \Z} N_k) = 0$ by subadditivity of $\mu$. From now on, when we want to highlight that this null set has been excluded, we denote
\begin{equation*}
\hat{X} := X \setminus \bigcup_{k \in \Z} N_k.
\end{equation*}
Especially, it holds that $\hat{X} \subset \bigcup_{\alpha \in I_k} \Qka$ for each $k \in \Z$ and $\mu(X \setminus \hat{X}) = 0$.

Before proving the claim (\ref{Qboundary}) in Theorem \ref{Qexist}, we need several lemmas first. The first one is a stronger version of the property (\ref{Qexist}\ref{Qball}) of dyadic cubes telling that dyadic cubes also contain a bigger ball, excluding a set of measure zero, though.
\begin{lemma} \label{Qballstronger}
Denote $C_3 = \frac{1}{4 A_0^2}$. Then for each $\Qka \in \D$ it holds
\begin{equation*}
B(\zka, C_3 \delta^k) \cap \hat{X} \subset \Qka.
\end{equation*}
\end{lemma}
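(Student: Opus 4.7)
\emph{Proof plan.} My strategy is to take $x \in B(\zka, C_3 \delta^k) \cap \hat{X}$, locate it inside a cube at the next generation $k+1$ (rather than at generation $k$ directly), and use the parent criterion (\ref{potree}\ref{poparent}) to show that the partial-order parent of this cube is $(k, \alpha)$. Passing one generation deeper is essential: the diameter bound (\ref{Qexist}\ref{Qdiam}) there carries an extra factor of $\delta$, which is what makes the estimate close with room to spare.

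First, since $x \in \hat{X} \subset \bigcup_{\beta \in I_{k+1}} Q^{k+1}_\beta$, there exists $\beta \in I_{k+1}$ with $x \in Q^{k+1}_\beta$. Reflexivity of $\preceq$ together with Definition \ref{Qka} places $z^{k+1}_\beta$ inside $Q^{k+1}_\beta$, so the diameter bound yields $\rho(x, z^{k+1}_\beta) \leq C_1 \delta^{k+1}$. Combining this with the hypothesis $\rho(x, \zka) < C_3 \delta^k = \frac{1}{4A_0^2} \delta^k$ via the quasi-triangle inequality (\ref{quasitriangle}) gives
\begin{equation*}
\rho(z^{k+1}_\beta, \zka) \leq A_0 C_1 \delta^{k+1} + \frac{1}{4A_0} \delta^k = \left(A_0 C_1 \delta + \frac{1}{4A_0}\right) \delta^k.
\end{equation*}
Since $C_1$ depends only on $A_0$, choosing $\delta$ small enough that $A_0 C_1 \delta < \frac{1}{4A_0}$—a constraint depending only on $A_0$—makes this strictly less than $\frac{1}{2A_0} \delta^k$. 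Then (\ref{potree}\ref{poparent}) forces $(k+1, \beta) \preceq (k, \alpha)$, so $Q^{k+1}_\beta \subset \Qka$ by Definition \ref{Qka} together with transitivity of $\preceq$, and therefore $x \in \Qka$.

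The one subtle point I expect to be the real obstacle is recognizing that one must descend to generation $k+1$. Trying to identify the cube at level $k$ itself would require controlling $\rho(z^k_{\beta_k}, \zka)$ using $\diam(Q^k_{\beta_k}) \leq C_1 \delta^k$; because $C_1 = A_0 + 3A_0^3 + 2A_0^4$ is large, the resulting bound is much larger than $\delta^k$, blocking any appeal to the separation property (\ref{zka}) to force $\beta_k = \alpha$. Going one generation deeper and exploiting (\ref{potree}\ref{poparent}) instead of (\ref{zka}) converts the obstructive factor $C_1$ into $C_1 \delta$. Finally, the value $C_3 = \frac{1}{4A_0^2}$ is tuned precisely so that the hypothesis contribution $A_0 \rho(x, \zka)$ fills only half of the $\frac{1}{2A_0}\delta^k$ budget of (\ref{potree}\ref{poparent}), leaving the other half for the diameter term.
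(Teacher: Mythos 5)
Your proof is correct, and it is a genuinely cleaner route than the paper's. The paper argues by contradiction: it assumes $x \notin \Qka$, so $x \in \Qkb$ for some $\beta \neq \alpha$ (using $x \in \hat{X}$), locates $x$ in a constituent ball $B(z^l_\gamma, a_0\delta^l)$ of that cube, and then splits into two cases — $l = k$, handled via the separation property (\ref{zka}) under an extra constraint $a_0 < \frac{3}{4A_0}$, and $l > k$, handled by passing to the level-$(k+1)$ ancestor and invoking (\ref{potree}\ref{poparent}). Your direct argument collapses this to the single essential step: cover $x$ at generation $k+1$ using $\hat{X} \subset \bigcup_{\beta \in I_{k+1}} Q^{k+1}_\beta$, bound $\rho(z^{k+1}_\beta, \zka)$ by $A_0 C_1 \delta^{k+1} + \frac{1}{4A_0}\delta^k < \frac{1}{2A_0}\delta^k$, and conclude $(k+1,\beta) \preceq (k,\alpha)$ via (\ref{potree}\ref{poparent}). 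The key estimate and the resulting constraint $\delta < \frac{1}{4A_0^2 C_1}$ are identical to the paper's second case, but you avoid the case analysis, the contradiction scaffolding, and the additional restriction on $a_0$ entirely. Your closing remarks correctly identify why descending one generation is forced (the factor $C_1$ at level $k$ is too large for (\ref{zka}) to bite, whereas at level $k+1$ it becomes $C_1\delta$) and why $C_3 = \frac{1}{4A_0^2}$ splits the $\frac{1}{2A_0}$ budget of (\ref{potree}\ref{poparent}) evenly. All prerequisites you invoke (property (\ref{Qexist}\ref{Qdiam}), the definition of $\hat{X}$ via (\ref{Qexist}\ref{Qfull}), and Lemma \ref{potree}) are established before this lemma in the paper, so there is no circularity.
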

\begin{proof}
Let $x \in B(\zka, C_3 \delta^k) \cap \hat{X}$. Suppose $x \not \in \Qka$, which implies $x \in Q^k_\beta$ for some other $\beta \in I_k$ because $x \in \hat{X}$. Then there exists $(l,\gamma) \preceq (k,\beta)$ such that $x \in B(z^l_\gamma, a_0 \delta^l)$ by Definition \ref{Qka}. The tree structure (\ref{potree}\ref{pogeneration}--\ref{poanchestor}) of the partial order $\preceq$ implies that $l \geq k$ and $(l,\gamma) \npreceq (k,\alpha)$. Let us consider two different cases separately. If $l = k$, using the triangle inequality (\ref{quasitriangle}) and the location of $x$ implies
\begin{eqnarray*}
\rho(\zka, z^l_\gamma) & \leq & A_0 \rho(\zka, x) + A_0 \rho(x, z^l_\gamma) \\
& \leq & A_0 C_3 \delta^k + A_0 a_0 \delta^l \\
& = &  A_0 \frac{1}{4 A_0^2} \delta^k + A_0 a_0 \delta^k \\
& \leq & (\tfrac{1}{4} + A_0 a_0) \delta^k \\
& < & \delta^k,
\end{eqnarray*}
where $a_0$ has been chosen to be smaller than $\frac{3}{4 A_0}$. Because $l = k$, the derived inequality contradicts (\ref{zka}).

On the other hand, if $l > k$, there exists $\sigma \in I_{k+1}$ such that $(l,\gamma) \preceq (k+1,\sigma)$ by the property (\ref{potree}\ref{poanchestor}) of the partial order $\preceq$. Since $x \in B(z^l_\gamma, a_0 \delta^l)$, $x \in Q^{k+1}_\sigma$ by Definition \ref{Qka}. Because also $z^{k+1}_\sigma \in Q^{k+1}_\sigma$, it holds $\rho(x,z^{k+1}_\sigma) \leq C_1 \delta^{k+1}$ by the property (\ref{Qexist}\ref{Qdiam}) of dyadic cubes. Then the triangle inequality (\ref{quasitriangle}) and the original assumption $x \in B(\zka, C_3 \delta^k)$ imply
\begin{eqnarray*}
\rho(\zka, z^{k+1}_\sigma) & \leq & A_0 \rho(\zka, x) + A_0 \rho(x, z^{k+1}_\sigma) \\
& \leq & A_0 C_3 \delta^k + A_0 C_1 \delta^{k+1} \\
& = & A_0 \frac{1}{4 A_0^2} \delta^k + A_0 C_1 \delta^{k+1} \\
& = & \big( \frac{1}{4 A_0} + A_0 C_1 \delta \big) \delta^k \\
& < & \frac{1}{2 A_0} \delta^k,
\end{eqnarray*}
where $\delta$ has been chosen to be smaller than $\frac{1}{4 A_0^2 C_1}$. Now, $(k+1,\sigma) \preceq (k,\alpha)$ by the property (\ref{potree}\ref{poparent}) of the partial order $\preceq$, and thus $(l,\gamma) \preceq (k,\alpha)$ by transitivity. This is a contradiction since we have also deduced $(l,\gamma) \npreceq (k,\alpha)$.
\end{proof}

Next we move to consider the set in the claim (\ref{Qboundary}) of Theorem \ref{Qexist} containing the points close to the boundary of a dyadic cube. We denote such set by
\begin{equation} \label{Ekat}
E^k_\alpha(\tau) := \{x \in \Qka : \rho(x, X \setminus \cl{\Qka}) < \tau \delta^k\}
\end{equation}
and call it the $\tau$-boundary of the cube $\Qka \in \D$. First, we show that the set $X \setminus \cl{\Qka}$ in the definition of $\tau$-boundary can be controlled through the set $\hat{X} \setminus \Qka$, which is easier to deal with.
\begin{lemma} \label{XmQcontrol}
Suppose $\Qka \in \D$ and $x \in X$. Then
\begin{equation*}
\rho(x, \hat{X} \setminus \Qka) \leq A_0 \rho(x, X \setminus \cl{\Qka}).
\end{equation*}
\end{lemma}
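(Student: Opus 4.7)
The plan is to approximate an arbitrary point of $X \setminus \cl{\Qka}$ by points of $\hat{X} \setminus \Qka$ by exploiting the fact that the complement of a closed set is open and that balls in a space of homogeneous type have positive measure, while $X \setminus \hat{X}$ is a null set.

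I would start by fixing $\eps > 0$ and choosing $y \in X \setminus \cl{\Qka}$ with $\rho(x,y) < \rho(x, X \setminus \cl{\Qka}) + \eps$ (assuming the infimum is finite; the case $X \setminus \cl{\Qka} = \varnothing$ renders the inequality trivial under the usual convention). Since $\cl{\Qka}$ is closed, the set $X \setminus \cl{\Qka}$ is open, so there exists $r > 0$, which can be taken arbitrarily small, such that $B(y,r) \subset X \setminus \cl{\Qka}$.

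Next I would use the doubling property (\ref{doublingfinite}) to conclude $\mu(B(y,r)) > 0$, and then the fact that $\mu(X \setminus \hat{X}) = 0$ to deduce $B(y,r) \cap \hat{X} \neq \varnothing$. Any $z$ in this intersection lies in $\hat{X}$ and outside $\cl{\Qka}$, hence outside $\Qka$, so $z \in \hat{X} \setminus \Qka$. Applying the quasi-triangle inequality (\ref{quasitriangle}) gives
\begin{equation*}
\rho(x, \hat{X} \setminus \Qka) \leq \rho(x,z) \leq A_0 \rho(x,y) + A_0 \rho(y,z) < A_0 \bigl( \rho(x, X \setminus \cl{\Qka}) + \eps \bigr) + A_0 r.
\end{equation*}

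The only subtlety, and the main obstacle, is that the quasi-triangle inequality introduces the constant $A_0$ on both terms, so one must verify that the extra $A_0 r$ term can truly be absorbed. This is handled by first letting $r \to 0$ (keeping $y$ fixed and using that $B(y,r) \subset X \setminus \cl{\Qka}$ for all sufficiently small $r$), and then letting $\eps \to 0$. After these two limits, the estimate reduces to the claimed inequality $\rho(x, \hat{X} \setminus \Qka) \leq A_0 \rho(x, X \setminus \cl{\Qka})$.
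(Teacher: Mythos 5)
Your proof is correct and uses exactly the same key ingredients as the paper's: openness of $X \setminus \cl{\Qka}$, positivity of $\mu$ on balls, and $\mu(X \setminus \hat{X}) = 0$ to produce points of $\hat{X} \setminus \Qka$ arbitrarily close to any $y \in X \setminus \cl{\Qka}$. The paper phrases this as a proof by contradiction that $\rho(y, \hat{X} \setminus \Qka) = 0$ and then takes infima, whereas you work directly with $\eps$ and $r$; this is only a presentational difference.
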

\begin{proof}
If $X \setminus \cl{\Qka} = \varnothing$, then $\rho(x, X \setminus \cl{\Qka}) = \infty$ and the claim holds. So, let $X \setminus \cl{\Qka} \neq \varnothing$ and $y \in X \setminus \cl{\Qka}$. We show first that $\rho(y, \hat{X} \setminus \Qka) = 0$. Suppose that $\rho(y, \hat{X} \setminus \Qka) = r > 0$, and let $z \in B(y, \eps_1)$, where $\eps_1 > 0$ is a parameter to be determined later. Then by the triangle inequality (\ref{quasitriangle}), we get
\begin{eqnarray*}
r = \rho(y, \hat{X} \setminus \Qka) & \leq & A_0 \rho(y,z) + A_0 \rho(z, \hat{X} \setminus \Qka) \\
& \leq & A_0 \eps_1 + A_0 \rho(z, \hat{X} \setminus \Qka),
\end{eqnarray*}
from which we can solve
\begin{equation*}
\rho(z, \hat{X} \setminus \Qka) \geq \frac{r}{A_0} - \eps_1 = \frac{r}{2 A_0} > 0,
\end{equation*}
when we have chosen $\eps_1 = \frac{r}{2 A_0}$. Thus
\begin{equation*}
\rho(B(y, \eps_1), \hat{X} \setminus \Qka) > 0
\end{equation*}
for the whole ball $B(y, \eps_1)$.

On the other hand, because $y \in X \setminus \cl{\Qka}$ and $X \setminus \cl{\Qka}$ is an open set as the complement of a closed set, it contains some ball $B(y, \eps_2)$. Combining the derived deductions implies that there exists a ball $B = B(y, \min \{ \eps_1, \eps_2 \})$ such that 
\begin{equation*}
B \subset X \setminus \cl{\Qka} \quad \text{and} \quad \rho(B, \hat{X} \setminus \Qka) > 0.
\end{equation*}
Further, it implies
\begin{equation*}
B \subset (X \setminus \cl{\Qka}) \setminus (\hat{X} \setminus \Qka) \subset X \setminus \hat{X}.
\end{equation*}
Thus, $0 < \mu(B) \leq \mu(X \setminus \hat{X}) = 0$ by the property (\ref{doublingfinite}) and monotonicity of $\mu$, which is a contradiction. Hence $\rho(y, \hat{X} \setminus \Qka) = 0$, and thus estimating with the triangle inequality (\ref{quasitriangle}) gives
\begin{equation*}
\rho(x, \hat{X} \setminus \Qka) \leq A_0 \rho(x, y) + A_0 \rho(y, \hat{X} \setminus \Qka) = A_0 \rho(x, y).
\end{equation*}
The claim follows by taking the infimum over the set $\{y \in X \setminus \cl{\Qka}\}$ of the both sides.
\end{proof}

We show next that descendants of a dyadic cube in an arbitrary generation cover its $\tau$-boundary for small enough $\tau$.
\begin{lemma} \label{17step1}
For each $N \in \N$ there exists $\tau' > 0$ such that if $\tau \in (0, \tau')$ and $x \in E^k_\alpha(\tau)$, then $x \in Q^{k+N}_\sigma$ for some $\sigma \in I_{k+N}$ with $(k+N,\sigma) \preceq (k,\alpha)$.
\end{lemma}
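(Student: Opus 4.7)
The plan is to unpack Definition~\ref{Qka} directly. For $x \in E^k_\alpha(\tau) \subset \Qka$, I would extract a pair $(l,\beta) \preceq (k,\alpha)$ with $x \in B(\zlb, a_0 \delta^l)$, and then split on whether $l \geq k+N$ or $k \leq l \leq k+N-1$.

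The first case should be immediate and will not use the $\tau$-boundary hypothesis at all. Lemma~\ref{potree}(\ref{poanchestor}) supplies a unique $\sigma \in I_{k+N}$ with $(l,\beta) \preceq (k+N,\sigma)$, and uniqueness of the generation-$k$ ancestor of $(l,\beta)$ will force $(k+N,\sigma) \preceq (k,\alpha)$. Definition~\ref{Qka} will then give $x \in B(\zlb, a_0 \delta^l) \subset Q^{k+N}_\sigma$, which is the desired conclusion.

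The real work is to rule out the remaining range $k \leq l \leq k+N-1$ by making $\tau'$ sufficiently small. My strategy is to show that such an $x$ sits too far interior to $\Qka$ to lie in $E^k_\alpha(\tau)$. Lemma~\ref{Qballstronger} applied to $(l,\beta)$, combined with $Q^l_\beta \subset \Qka$ (from Lemma~\ref{connection}), implies that every $y \in \hat{X} \setminus \Qka$ lies outside $B(\zlb, C_3 \delta^l)$. Two applications of the quasi-triangle inequality, together with $\rho(\zlb, x) < a_0 \delta^l$, will yield
\[
\rho(x,y) \;\geq\; \frac{C_3 - A_0 a_0}{A_0}\, \delta^l \;\geq\; \frac{C_3 - A_0 a_0}{A_0}\, \delta^{k+N-1},
\]
provided $a_0$ has been restricted by $a_0 < C_3/A_0$ (a constraint depending only on $A_0$). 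Taking the infimum over such $y$ and invoking Lemma~\ref{XmQcontrol} will convert this to
\[
\rho(x, X \setminus \cl{\Qka}) \;\geq\; \frac{C_3 - A_0 a_0}{A_0^2}\, \delta^{N-1}\, \delta^k,
\]
so choosing $\tau' := \frac{C_3 - A_0 a_0}{A_0^2}\, \delta^{N-1}$ will force any $x$ with $l < k+N$ out of $E^k_\alpha(\tau)$ for every $\tau < \tau'$, contradicting the hypothesis.

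The main obstacle is this second case: one needs to thread the two nested quasi-triangle inequality applications (each costing a factor of $A_0$) and then convert from $\hat{X} \setminus \Qka$ to $X \setminus \cl{\Qka}$ via Lemma~\ref{XmQcontrol}, while keeping $\tau'$ positive and dependent only on the structural constants $A_0$, $A_1$, and the parameter $N$. The degenerate subcase $X \setminus \cl{\Qka} = \varnothing$ would be handled separately by the convention $\inf \varnothing = +\infty$, which makes $E^k_\alpha(\tau)$ empty and the statement vacuous.
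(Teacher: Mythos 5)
Your proposal is correct and follows essentially the same route as the paper: both use Lemma \ref{Qballstronger} together with Lemma \ref{connection} to bound the distance to $\hat{X} \setminus \Qka$ from below by a multiple of $\delta^l$, then Lemma \ref{XmQcontrol} and the $\tau$-boundary condition to bound it from above by a multiple of $\tau \delta^k$, forcing $l \geq k+N$ for small $\tau$. The only cosmetic differences are that the paper measures distances from $\zlb$ rather than from $x$, and verifies $(k+N,\sigma) \preceq (k,\alpha)$ via cube intersection and Lemma \ref{connection} where you use uniqueness of ancestors in the partial order directly.
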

\begin{proof}
Fix $N \in \N$ and let $x \in E^k_\alpha(\tau)$, where $\tau > 0$. Then $x \in B(\zlb, a_0 \delta^l)$ for some $(l,\beta) \preceq (k,\alpha)$ by Definition \ref{Qka}. Because of Lemmas \ref{Qballstronger} and \ref{connection}, we get
\begin{equation*}
B(\zlb, C_3 \delta^l) \cap \hat{X} \subset \Qlb \subset \Qka,
\end{equation*}
where $C_3 = \frac{1}{4 A_0^2}$. This implies
\begin{equation*}
\hat{X} \setminus \Qka \subset \hat{X} \setminus (B(\zlb, C_3 \delta^l) \cap \hat{X}) = \hat{X} \setminus B(\zlb, C_3 \delta^l),
\end{equation*}
and thus
\begin{eqnarray*}
\rho(\zlb, \hat{X} \setminus \Qka) & = & \inf_{y \in \hat{X} \setminus \Qka} \rho(\zlb, y) \\
& \geq & \inf_{y \in \hat{X} \setminus B(\zlb, C_3 \delta^l)} \rho(\zlb, y) \\
& \geq & C_3 \delta^l.
\end{eqnarray*}
On the other hand, by Lemma \ref{XmQcontrol}, the triangle inequality (\ref{quasitriangle}) and the choice of $x$
\begin{eqnarray*}
\rho(\zlb, \hat{X} \setminus \Qka) & \leq & A_0 \rho(\zlb, X \setminus \cl{\Qka}) \\
& \leq & A_0^2 \rho(\zlb, x) + A_0^2 \rho(x, X \setminus \cl{\Qka}) \\
& \leq & A_0^2 a_0 \delta^l + A_0^2 \tau \delta^k.
\end{eqnarray*}
By combining these two inequalities, we get $(C_3 - A_0^2 a_0) \delta^l \leq A_0^2 \tau \delta^k$. Choosing $a_0$ to be smaller than $\frac{1}{8 A_0^4}$ implies $C_3 - A_0^2 a_0 \geq \frac{1}{8 A_0^2} > 0$, and thus $\delta^l \leq 8 A_0^4 \tau \delta^k$. Now, when $\tau$ is chosen to be smaller than $\frac{1}{8 A_0^4} \delta^N =: \tau'$, we must have $l \geq k+N$. Then choose $\sigma \in I_{k+N}$ such that $(l,\beta) \preceq (k+N,\sigma)$. Because $x \in B(\zlb, a_0 \delta^l)$, also $x \in Q^{k+N}_\sigma$ by Definition \ref{Qka}. In addition, because $x \in \Qka \cap Q^{k+N}_\sigma$, it holds $Q^{k+N}_\sigma \subset \Qka$ by the property (\ref{Qexist}\ref{Qtree}) of dyadic cubes, and thus $(k+N,\sigma) \preceq (k,\alpha)$ by Lemma \ref{connection}. Hence the claim has been proved.
\end{proof}

By Lemma \ref{17step1} and the property (\ref{Qexist}\ref{Qanchestor}) of dyadic cubes, when $\tau > 0$ is small enough, for each point $x$ in $E^k_\alpha(\tau)$ there exists a unique chain from the generation $k$ to the generation $k+N$ such $x$ belongs to every dyadic cube in the chain. Denote the corresponding chain of index pairs by
\begin{equation} \label{CkNx}
\C_k^N(x) := \{(j,\sigma(x,j))\}_{j=k}^{k+N}, \quad x \in E^k_\alpha(\tau),
\end{equation}
where $\sigma(x,j) \in I_j$ such that $\sigma(x,k) = \alpha$, $x \in Q^j_{\sigma(x,j)}$ and
\begin{equation*}
(j,\sigma(x,j)) \preceq (j-1,\sigma(x,j-1)), \quad j = k+1, \dots, k+N.
\end{equation*}
\begin{lemma} \label{17step2}
Suppose that $\Qka \in \D$ and $N \in \N$. If $\tau$ is small enough and independent of $\Qka$, then there exists $\eps_1 > 0$ depending only on the constants $A_0$ and $A_1$ such that for each $x \in E^k_\alpha(\tau)$
\begin{equation*}
\rho(z^j_{\sigma_j},z^i_{\sigma_i}) \geq \eps_1 \delta^j \quad \text{when } (j,\sigma_j), (i,\sigma_i) \in \C_k^N(x) \text{ and } j < i.
\end{equation*}
\end{lemma}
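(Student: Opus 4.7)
The plan is to leverage the hypothesis $x \in E^k_\alpha(\tau)$ to find an auxiliary point $y \in \hat{X}$ that lies outside every cube in the chain $\C^N_k(x)$, and then use the strengthened inner-ball property of Lemma \ref{Qballstronger} to convert ``$y$ avoids $B(z^j_{\sigma_j}, C_3\delta^j)$'' into a uniform lower bound on $\rho(x, z^j_{\sigma_j})$ at each generation $j$. A second round of the quasi-triangle inequality will then separate the two centers $z^j_{\sigma_j}$ and $z^i_{\sigma_i}$.

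Concretely, I would first apply Lemma \ref{XmQcontrol} to get $\rho(x, \hat{X} \setminus \Qka) \leq A_0 \rho(x, X \setminus \cl{\Qka}) < A_0\tau\delta^k$, and then pick $y \in \hat{X} \setminus \Qka$ with $\rho(x,y) \leq 2 A_0 \tau \delta^k$. For every $(j,\sigma_j) \in \C^N_k(x)$ we have $Q^j_{\sigma_j} \subset \Qka$ by Lemma \ref{connection}, so $y \notin Q^j_{\sigma_j}$, and then Lemma \ref{Qballstronger} (combined with $y \in \hat X$) forces $\rho(y, z^j_{\sigma_j}) \geq C_3 \delta^j$ with $C_3 = \frac{1}{4A_0^2}$. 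One application of the quasi-triangle inequality yields
$$\rho(x, z^j_{\sigma_j}) \;\geq\; \tfrac{1}{A_0} \rho(y, z^j_{\sigma_j}) - \rho(x,y) \;\geq\; \tfrac{C_3}{A_0}\delta^j - 2 A_0 \tau \delta^k.$$

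I then require $\tau < \frac{\delta^N}{16 A_0^4}$, which is stronger than the threshold $\tau'$ from Lemma \ref{17step1} (so $\C^N_k(x)$ is indeed defined) and makes $2A_0 \tau \delta^k \leq \frac{C_3}{2 A_0}\delta^j$ for every $j \leq k+N$, giving $\rho(x, z^j_{\sigma_j}) \geq \frac{C_3}{2A_0}\delta^j$ uniformly in $j$. For $j < i$ the property (\ref{Qexist}\ref{Qdiam}) applied to the finer cube $Q^i_{\sigma_i}$ gives $\rho(x, z^i_{\sigma_i}) \leq C_1 \delta^i \leq C_1 \delta \cdot \delta^j$, so a second quasi-triangle estimate produces
$$\rho(z^j_{\sigma_j}, z^i_{\sigma_i}) \;\geq\; \tfrac{1}{A_0} \rho(x, z^j_{\sigma_j}) - \rho(x, z^i_{\sigma_i}) \;\geq\; \Bigl( \tfrac{C_3}{2 A_0^2} - C_1 \delta \Bigr) \delta^j.$$
A final smallness requirement $\delta < \frac{C_3}{4 A_0^2 C_1}$ (which, like all earlier constraints, depends only on $A_0$ and $A_1$) yields $\eps_1 := \frac{C_3}{4 A_0^2} = \frac{1}{16 A_0^4}$.

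The only real obstacle is the mismatch of scales: the $\tau$-closeness information lives at scale $\delta^k$, whereas the desired lower bound on the center distances must hold at the potentially much finer scale $\delta^j$ with $j \leq k+N$. Allowing the threshold for $\tau$ to carry the factor $\delta^N$ is precisely what absorbs this scale gap, and is the only reason $\tau$ is required to depend on $N$ in the statement.
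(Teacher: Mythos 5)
Your proposal is correct and relies on the same ingredients as the paper's proof (Lemma \ref{XmQcontrol}, the inner ball of Lemma \ref{Qballstronger}, the diameter bound (\ref{Qexist}\ref{Qdiam}), and the choice $\tau \lesssim \delta^N$ to absorb the scale gap between $\delta^k$ and $\delta^j$); the only difference is presentational, as the paper runs the argument by contradiction through an upper bound on $\rho(z^j_{\sigma_j}, \hat{X} \setminus Q^k_{\sigma_k})$, whereas you argue directly via a lower bound on $\rho(x, z^j_{\sigma_j})$. Both routes yield the same kind of constant $\eps_1$ depending only on $A_0$.
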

\begin{proof}
Let $\tau \in (0,\tau')$ to be determined later, where $\tau'$ is determined by $N$ as in Lemma \ref{17step1}. Suppose that the claim does not hold, i.e. for each $\eps_1 > 0$ we have
\begin{equation*}
\rho(z^j_{\sigma_j}, z^i_{\sigma_i}) < \eps_1 \delta^j
\end{equation*}
for some $x \in E^k_\alpha(\tau)$ and $(j,\sigma_j), (i,\sigma_i) \in \C_k^N(x)$, where $j < i$. For consistency, denote $\sigma_k := \sigma(x,k) = \alpha$. Then Lemma \ref{XmQcontrol}, the triangle inequality (\ref{quasitriangle}) and the property (\ref{Qexist}\ref{Qdiam}) of dyadic cubes imply
\begin{eqnarray*}
\rho(z^j_{\sigma_j}, \hat{X} \setminus Q^k_{\sigma_k}) & \leq & A_0 \rho(z^j_{\sigma_j}, X \setminus \cl{Q^k_{\sigma_k}})\\
& \leq & A_0^2 \rho(z^j_{\sigma_j}, x) + A_0^2 \rho(x, X \setminus \cl{Q^k_{\sigma_k}}) \\
& \leq & A_0^3 \rho(z^j_{\sigma_j}, z^i_{\sigma_i}) + A_0^3 \rho(z^i_{\sigma_i}, x) + A_0^2 \rho(x, X \setminus \cl{Q^k_{\sigma_k}}) \\
& < & A_0^3 \eps_1 \delta^j + A_0^3 C_1 \delta^i + A_0^2 \tau \delta^k \\
& = & (A_0^3 \eps_1 + A_0^3 C_1 \delta^{i-j} + A_0^2 \tau \delta^{k-j}) \delta^j \\
& \leq & (A_0^3 \eps_1 + A_0^3 C_1 \delta + A_0^2 \tau \delta^{-N}) \delta^j,
\end{eqnarray*}
which holds for each $\eps_1 > 0$ by our assumption and for each $\tau \in (0, \tau')$ by Lemma \ref{17step1}. Choose $\eps_1$ such that $A_0^3 \eps_1 < \frac{1}{3} C_3$, $\delta$ such that $A_0^3 C_1 \delta < \frac{1}{3} C_3$, and $\tau$ such that $A_0^2 \tau \delta^{-N} < \frac{1}{3} C_3$, where $C_3 = \frac{1}{4 A_0^2}$ as in Lemma \ref{Qballstronger}. Then
\begin{equation} \label{3C3}
\rho(z^j_{\sigma_j}, \hat{X} \setminus Q^k_{\sigma_k}) < (\frac{1}{3} C_3 + \frac{1}{3} C_3 + \frac{1}{3} C_3) \delta^j = C_3 \delta^j.
\end{equation}
Further, by Lemma \ref{Qballstronger} and the definition of the chain $\C_k^N(x)$, it holds
\begin{equation*}
B(z^j_{\sigma_j}, C_3 \delta^j) \cap \hat{X} \subset Q^j_{\sigma_j} \subset Q^k_{\sigma_k},
\end{equation*}
i.e. $\hat{X} \setminus Q^k_{\sigma_k} \subset \hat{X} \setminus B(z^j_{\sigma_j}, C_3 \delta^j)$. Combining this with the inequality (\ref{3C3}) implies
\begin{equation*}
\rho(z^j_{\sigma_j}, \hat{X} \setminus B(z^j_{\sigma_j}, C_3 \delta^j)) \leq \rho(z^j_{\sigma_j}, \hat{X} \setminus Q^k_{\sigma_k}) < C_3 \delta^j,
\end{equation*}
which is a contradiction since the distance between the center and the complement of the same ball can not be smaller than its radius. Thus, the claim has been proved.
\end{proof}

Denote the centers of dyadic cubes in the chain (\ref{CkNx}) which are close to the $\tau$-boundary (\ref{Ekat}) of the cube $\Qka$ by
\begin{equation} \label{Sjt}
S_j(\tau) := \bigcup_{x \in E^k_\alpha(\tau)} \{z^j_{\sigma_j} : (j,\sigma_j) \in \C_k^N(x)\}, \quad k \leq j \leq k+N.
\end{equation}
\begin{lemma} \label{17step3}
There exists $\tau > 0$, and $\eps_2 > 0$ depending only on $A_0$ and $A_1$ such that
\begin{equation*}
B(z, \eps_2 \delta^i) \cap B(z', \eps_2 \delta^j) = \varnothing \quad \text{for each } z \in S_i(\tau), z' \in S_j(\tau), z \neq z'.
\end{equation*}
\end{lemma}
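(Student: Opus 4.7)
The plan is to fix $\tau$ small enough (in particular, so that Lemmas \ref{17step1} and \ref{17step2} apply with the desired parameter $N$), and then to split into cases according to the levels $i,j$ and the tree-theoretic relationship between the underlying index pairs. Write $z = z^j_{\sigma_j}$ and $z' = z^i_{\sigma_i'}$, and without loss of generality assume $j \leq i$. Observing that a point in $B(z,\eps_2\delta^j)\cap B(z',\eps_2\delta^i)$ forces $\rho(z,z')\leq 2A_0\eps_2\delta^j$ via (\ref{quasitriangle}), it suffices to produce a lower bound $\rho(z,z')\geq c\delta^j$ with $c>0$ depending only on $A_0,A_1$; then $\eps_2 := c/(3A_0)$ works.

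\textbf{Case $i=j$.} Here $z,z'\in Z_j$ are distinct center points, so (\ref{zka}) gives $\rho(z,z')\geq \delta^j$ directly.

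\textbf{Case $j<i$, with $(i,\sigma_i')\preceq (j,\sigma_j)$.} Pick any $x'\in E^k_\alpha(\tau)$ realizing $z'$, i.e.\ $(i,\sigma_i')\in\C_k^N(x')$, and let $\sigma(x',j)\in I_j$ be the level-$j$ entry of $\C_k^N(x')$. Both $(j,\sigma_j)$ and $(j,\sigma(x',j))$ are ancestors of $(i,\sigma_i')$ at level $j$, so the uniqueness clause (\ref{potree}\ref{poanchestor}) forces $\sigma(x',j)=\sigma_j$. Hence both $(j,\sigma_j)$ and $(i,\sigma_i')$ lie in the single chain $\C_k^N(x')$, and Lemma \ref{17step2} yields $\rho(z,z')\geq \eps_1\delta^j$.

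\textbf{Case $j<i$, with $(i,\sigma_i')\npreceq (j,\sigma_j)$.} Let $(j,\sigma'')$ be the unique ancestor of $(i,\sigma_i')$ at level $j$; then $\sigma''\neq \sigma_j$, so $Q^j_{\sigma''}$ and $Q^j_{\sigma_j}$ are disjoint by Lemma \ref{QkaQkb}, and $B(z^i_{\sigma_i'},a_0\delta^i)\subset Q^i_{\sigma_i'}\subset Q^j_{\sigma''}$ by Lemma \ref{connection}. Suppose for contradiction that $\rho(z,z')<R\delta^j$ with $R:=C_3/A_0-a_0$ (positive once $a_0<C_3/A_0=\tfrac{1}{4A_0^3}$). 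For any $y\in B(z^i_{\sigma_i'},a_0\delta^i)$,
\begin{equation*}
\rho(z^j_{\sigma_j},y)\leq A_0\rho(z^j_{\sigma_j},z^i_{\sigma_i'})+A_0\rho(z^i_{\sigma_i'},y) < A_0R\delta^j + A_0 a_0\delta^j = C_3\delta^j,
\end{equation*}
so $y\in B(z^j_{\sigma_j},C_3\delta^j)$. If moreover $y\in\hat X$, then by Lemma \ref{Qballstronger} we get $y\in Q^j_{\sigma_j}$, contradicting $y\in Q^j_{\sigma''}$. Thus $B(z^i_{\sigma_i'},a_0\delta^i)\subset X\setminus\hat X$, which has measure zero, contradicting (\ref{doublingfinite}). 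Hence $\rho(z,z')\geq R\delta^j$.

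Taking $c:=\min\{1,\eps_1,R\}$ (a constant depending only on $A_0,A_1$) and $\eps_2:=c/(3A_0)$ finishes the proof. The main subtlety is Case 2b, where $z$ and $z'$ arise from different chains and Lemma \ref{17step2} is unavailable; the key idea there is to leverage the inner ball property of Lemma \ref{Qballstronger} together with the fact that $\mu(X\setminus\hat X)=0$ to transport the disjointness of sibling cubes into a quantitative separation of their centers.
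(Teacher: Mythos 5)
Your proof is correct, and your overall strategy (lower-bound $\rho(z,z')$ by $c\delta^j$, then choose $\eps_2$ accordingly, splitting on the tree-theoretic relationship between the index pairs) agrees with the paper's. Your treatment of the ``same chain'' case is essentially identical to the paper's. The genuine difference is in the $(i,\sigma_i')\npreceq(j,\sigma_j)$ case: the paper observes directly that $Q^i_{\sigma_i'}$ and $Q^j_{\sigma_j}$ are disjoint (via Lemma~\ref{connection}), hence the contained balls $B(z',a_0\delta^i)\subset Q^i_{\sigma_i'}$ and $B(z,a_0\delta^j)\subset Q^j_{\sigma_j}$ are disjoint, and simply takes $\eps_2\leq a_0$ --- no center-to-center estimate is needed in that branch. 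You instead obtain a \emph{quantitative} separation $\rho(z,z')\geq R\delta^j$ by combining the inner-ball property of Lemma~\ref{Qballstronger} with the fact that $\mu(X\setminus\hat X)=0$, which requires the additional (but harmless, since $1/(8A_0^4)<1/(4A_0^3)$ is already imposed) constraint $a_0<C_3/A_0$ and an extra measure-theoretic step. Both routes work; the paper's is shorter because ball-disjointness suffices for the conclusion and one need not pass through a center separation at all in that case, while yours has the minor aesthetic merit of treating all three cases uniformly through a single lower bound on $\rho(z,z')$. Your explicit $i=j$ case is not strictly needed (it is subsumed by the paper's ``different chains'' branch), but isolating it is harmless and arguably clearer.
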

\begin{proof}
Choose $\tau$ to be the small number determined by Lemma \ref{17step2}. Fix indices $i$ and $j$ such that $k \leq j \leq i \leq k + N$, and centers $z \in S_i(\tau), z' \in S_j(\tau)$. If $z$ and $z'$ belong to different chains $\C_k^N(\cdot)$, then $z = z^i_{\sigma(x,i)}, z' = z^j_{\sigma(x',j)}$ for some $x, x' \in E^k_\alpha(\tau)$, where $(i,\sigma(x,i)) \npreceq (j,\sigma(x',j))$. Then $Q^i_{\sigma(x,i)} \cap Q^j_{\sigma(x',j)} = \varnothing$ because of Lemma \ref{connection}. This implies 
\begin{equation*}
B(z^i_{\sigma(x,i)}, a_0 \delta^i) \cap B(z^j_{\sigma(x',j)}, a_0 \delta^j) = \varnothing
\end{equation*}
by the property (\ref{Qexist}\ref{Qball}) of dyadic cubes, and thus we can choose $\varepsilon_2 \leq a_0$. Hence the first case has been proved.

If $z$ and $z'$, instead, belong to the same chain $\C_k^N(\cdot)$, then there exists $x \in E^k_\alpha(\tau)$ such that $z = z^i_{\sigma(x,i)}, z' = z^j_{\sigma(x,j)}$. In this case, the strict inequality $j < i$ must hold due to the assumption $z \neq z'$. This implies $\rho(z,z') \geq \varepsilon_1 \delta^j$ for some $\eps_1 > 0$ by Lemma \ref{17step2}. On the other hand, if we assume that for each $\varepsilon_2 > 0$ there exists $y \in B(z, \varepsilon_2 \delta^i) \cap B(z', \varepsilon_2 \delta^j)$, estimating with the triangle inequality (\ref{quasitriangle}) gives
\begin{eqnarray*}
\rho(z,z') & \leq & A_0 \rho(z,y)+ A_0 \rho(y,z') \\
& < & A_0 \eps_2 \delta^i + A_0 \eps_2 \delta^j \\
& \leq & 2 A_0 \eps_2 \delta^j \\
& = & \eps_1 \delta^j,
\end{eqnarray*}
where we have chosen $\eps_2 = \frac{\eps_1}{2 A_0}$. In this case, we get a contradiction, and thus the claim has been proved.
\end{proof}

The ratio of the measures of a $\tau$-boundary and the corresponding cube becomes arbitrarily small when $\tau$ is chosen small enough.
\begin{lemma} \label{17step4}
For each $\eps > 0$ there exists $\tau > 0$ such that
\begin{equation*}
\mu(E^k_\alpha(\tau)) < \eps \mu(\Qka) \quad \text{for each } \Qka \in \D.
\end{equation*}
\end{lemma}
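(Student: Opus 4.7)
The plan is to exploit the chain structure $\C_k^N(x)$ and the disjoint ball packing from Lemma \ref{17step3} to bound $\mu(E^k_\alpha(\tau))$ by a constant times $\mu(\Qka)/(N+1)$, so that choosing $N$ large and then $\tau$ small forces the bound below $\eps$. Given $\eps > 0$, I would first set aside $N \in \N$ to be chosen at the end, and then pick $\tau > 0$ small enough that Lemmas \ref{17step1}--\ref{17step3} all apply for chains of length $N$. For such $\tau$, every point $x \in E^k_\alpha(\tau)$ carries a chain $\C_k^N(x) = \{(j,\sigma(x,j))\}_{j=k}^{k+N}$ with $x \in Q^j_{\sigma(x,j)}$ at each generation $j \in \{k, k+1, \dots, k+N\}$.

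Since cubes of the same generation are disjoint (Lemma \ref{QkaQkb}), for each such $x$ and each $j$ there is exactly one $z \in S_j(\tau)$, namely $z = z^j_{\sigma(x,j)}$, with $x \in Q^j_{\sigma_j(z)}$. Summing over $j$ gives the pointwise inequality
\begin{equation*}
(N+1)\, \chi_{E^k_\alpha(\tau)}(x) \leq \sum_{j=k}^{k+N} \sum_{z \in S_j(\tau)} \chi_{Q^j_{\sigma_j(z)}}(x),
\end{equation*}
and integrating against $\mu$ yields
\begin{equation*}
(N+1)\, \mu(E^k_\alpha(\tau)) \leq \sum_{j=k}^{k+N} \sum_{z \in S_j(\tau)} \mu(Q^j_{\sigma_j(z)}).
\end{equation*}

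To bound the right-hand side, I would compare each cube to the small ball from Lemma \ref{17step3}. Shrinking $\eps_2$ if necessary so that $\eps_2 \leq C_3$, Lemma \ref{Qballstronger} combined with Lemma \ref{connection} gives $B(z, \eps_2 \delta^j) \cap \hat{X} \subset Q^j_{\sigma_j(z)} \subset \Qka$. By Lemma \ref{17step3} the balls $B(z, \eps_2 \delta^j)$ are pairwise disjoint across all pairs $(j,z)$ appearing, so using $\mu(X \setminus \hat{X}) = 0$ their measures sum to at most $\mu(\Qka)$. Meanwhile $Q^j_{\sigma_j(z)} \subset B(z, C_1 \delta^j)$ by (\ref{Qexist}\ref{Qdiam}), and choosing $d_0 \in \N$ with $2^{d_0} \eps_2 \geq C_1$ yields $\mu(Q^j_{\sigma_j(z)}) \leq A_1^{d_0}\, \mu(B(z, \eps_2 \delta^j))$ by the doubling property (\ref{doubling}). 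Plugging in, I obtain $(N+1)\, \mu(E^k_\alpha(\tau)) \leq A_1^{d_0}\, \mu(\Qka)$. Since $A_1^{d_0}$ depends only on $A_0$ and $A_1$, choosing $N$ so that $A_1^{d_0}/(N+1) < \eps$ and then $\tau$ accordingly completes the argument.

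The main obstacle is keeping the order of quantifiers honest: $\eps_2$ (from Lemma \ref{17step3}) and hence $d_0$ must be fixed \emph{before} $N$, so that the comparison constant $A_1^{d_0}$ stays independent of $N$; only then is $N$ chosen large enough to beat $\eps$, and finally $\tau$ is chosen small enough to make the chain machinery of Lemmas \ref{17step1}--\ref{17step3} applicable for that specific $N$. One must also verify that no bound on $\tau$ introduced during the proof depends on the particular cube $\Qka$, but this is already guaranteed by the uniformity built into the preceding lemmas.
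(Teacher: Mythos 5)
Your proof is correct and rests on the same core mechanism as the paper's: the disjoint ball packing of Lemma \ref{17step3}, the containment of all those balls in $\Qka$, and the fact that each point of $E^k_\alpha(\tau)$ carries a chain $\C^N_k(x)$ of $N+1$ nested cubes, one per generation. What differs is the bookkeeping. The paper first covers $E^k_\alpha(\tau)$ by $C_1$-radius balls at the finest level $k+N$, then for each intermediate level $j$ reorganizes that cover by ancestors $w\in S_j(\tau)$, applying the doubling property twice (once at level $k+N$, once at level $j$) to arrive at disjoint sets $G_j\subset\Qka$ and a constant $C^2$. You instead sum the chain condition pointwise, obtaining $(N+1)\chi_{E^k_\alpha(\tau)}\leq\sum_{j,z}\chi_{Q^j_{\sigma_j(z)}}$, integrate, and apply the doubling property only once to compare each cube with its small disjoint ball; this is more direct and yields a single factor $A_1^{d_0}$ rather than a square. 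Your handling of the quantifier order --- fixing $\eps_2$ and $d_0$ independently of $N$, then choosing $N$, then $\tau$ --- is exactly what is required, and the uniformity in $\Qka$ is inherited correctly from the preceding lemmas. One small simplification: the detour through Lemma \ref{Qballstronger} is unnecessary, since $\eps_2\leq a_0$ already gives the full inclusion $B(z,\eps_2\delta^j)\subset Q^j_{\sigma_j(z)}$ directly from Definition \ref{Qka}, which is what the paper does and avoids the $\hat X$ caveat.
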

\begin{proof}
Fix $\eps > 0$ and $\Qka \in \D$. Let $N \in \N$ be a large number to be determined later. Let $\eps_2$ be small enough determined by Lemma \ref{17step3} and $\tau$ determined by $N$ such that such $\eps_2$ exists. First, we show that
\begin{equation*}
E^k_\alpha(\tau) \subset \bigcup_{z \in S_{k+N}(\tau)} B(z, C_1 \delta^{k+N}).
\end{equation*}
If $x \in E^k_\alpha(\tau)$, then $x \in Q^{k+N}_\sigma$ for some $(k+N,\sigma) \in \C_k^N(x)$ by Lemma \ref{17step1}. Then $\rho(x,z^{k+N}_\sigma) < C_1 \delta^{k+N}$ by the property (\ref{Qexist}\ref{Qdiam}) of dyadic cubes, i.e. $x \in B(z^{k+N}_\sigma, C_1 \delta^{k+N})$. Because $z^{k+N}_\sigma \in S_{k+N}(\tau)$, we further get
\begin{equation*}
x \in \bigcup_{z \in S_{k+N}(\tau)} B(z, C_1 \delta^{k+N}).
\end{equation*}
Then by monotonicity, subadditivity and the doubling property (\ref{doubling}) of $\mu$,
\begin{eqnarray}
\nonumber \mu(E^k_\alpha(\tau)) & \leq & \mu \big( \bigcup_{z \in S_{k+N}(\tau)} B(z, C_1 \delta^{k+N}) \big) \\
\label{17step4ie1} & \leq & \sum_{z \in S_{k+N}(\tau)} \mu(B(z, C_1 \delta^{k+N})) \\
\nonumber & \leq & C \sum_{z \in S_{k+N}(\tau)} \mu(B(z, \eps_2 \delta^{k+N})),
\end{eqnarray}
where $C = A_1^d$ when applied (\ref{doubling}) $d$ times by choosing $d \in \N$ such that $2^d \eps_2 \geq C_1$.

Next, let $k \leq j \leq k+N$ and denote $z \preceq w$ to mean that $(l,\beta) \preceq (m,\gamma)$ when $z = z^l_\beta$ and $w = z^m_\gamma$ are the centers of dyadic cubes. Because the partial order $\preceq$ forms a tree structure and the balls $B(z, \eps_2 \delta^{k+N})$ and $B(z', \eps_2 \delta^{k+N})$, $z \neq z'$, are disjoint by Lemma \ref{17step3}, we can split the sum in the above inequality as follows:
\begin{equation} \label{17step4ie2}
\sum_{z \in S_{k+N}(\tau)} \mu(B(z, \eps_2 \delta^{k+N})) = \sum_{w \in S_j(\tau)} \sum_{\substack{z \in S_{k+N}(\tau) \\ z \preceq w}} \mu(B(z, \eps_2 \delta^{k+N})).
\end{equation}
By the fact $\eps_2 \leq a_0$, Definition \ref{Qka} and the property (\ref{Qexist}\ref{Qdiam}) of dyadic cubes, the balls in the sum satisfy
\begin{equation*}
B(z, \eps_2 \delta^{k+N}) \subset B(z, a_0 \delta^{k+N}) \subset Q^j(w) \subset B(w, C_1 \delta^j),
\end{equation*}
where $Q^j(w) \in \D_j$ is the cube whose center is $w$. On the other hand, the balls $B(z, \eps_2 \delta^{k+N})$ are disjoint by Lemma \ref{17step3}, and thus
\begin{equation} \label{17step4ie3}
\sum_{\substack{z \in S_{k+N}(\tau) \\ z \preceq w}} \mu(B(z, \eps_2 \delta^{k+N})) \leq \mu(B(w, C_1 \delta^j))
\end{equation}
by additivity and monotonicity of the measure $\mu$.

By combining the inequalities (\ref{17step4ie1})--(\ref{17step4ie3}), applying the doubling condition again between $C_1$ and $\eps_2$ and noticing that the balls $B(w, \eps_2 \delta^j)$ are disjoint by Lemma \ref{17step3}, we get
\begin{eqnarray*}
\mu(E^k_\alpha(\tau)) & \leq & C \sum_{z \in S_{k+N}(\tau)} \mu(B(z, \eps_2 \delta^{k+N})) \\
& = & C \sum_{w \in S_j(\tau)} \sum_{\substack{z \in S_{k+N}(\tau) \\ z \preceq w}} \mu(B(z, \eps_2 \delta^{k+N})) \\
& \leq & C \sum_{w \in S_j(\tau)} \mu(B(w, C_1 \delta^j)) \\
& \leq & C^2 \sum_{w \in S_j(\tau)} \mu(B(w, \eps_2 \delta^j)) \\
& = & C^2 \mu \big( \bigcup_{w \in S_j(\tau)} B(w, \eps_2 \delta^j) \big).
\end{eqnarray*}

Denote $G_j := \bigcup_{z \in S_j(\tau)} B(z, \eps_2 \delta^j)$. Then the derived inequality becomes
\begin{equation*}
\mu(E^k_\alpha(\tau)) \leq C^2 \mu(G_j), \quad k \leq j \leq k+N.
\end{equation*}
The sets $G_j$ are subsets of the original cube $\Qka$ by Definition \ref{Qka} and the fact $\eps_2 \leq a_0$. In addition, they are disjoint by Lemma \ref{17step3}. Therefore
\begin{equation*}
\mu(\Qka) \geq \mu \big( \bigcup_{j=k}^{k+N} G_j \big) = \sum_{j=k}^{k+N} \mu(G_j) \geq \sum_{j=k}^{k+N} \frac{1}{C^2} \mu(E^k_\alpha(\tau)) \geq \frac{N}{C^2} \mu(E^k_\alpha(\tau))
\end{equation*}
by monotonicity and additivity of the measure $\mu$. Choosing $N$ to be greater than $\frac{C^2}{\eps}$ implies $\mu(E^k_\alpha(\tau)) < \eps \mu(\Qka)$, and thus the claim has been proved.
\end{proof}

Denote the descendants of each $\Qka \in \D$ in the generation $k+j$ that are close to its boundary as follows:
\begin{equation*}
\mathcal{E}_j(\Qka) := \{Q^{k+j}_\beta \subset \Qka : \rho(Q^{k+j}_\beta, X \setminus \cl{\Qka}) \leq C_4 \delta^{k+j}\},
\end{equation*}
where $C_4$ is a large constant to be determined later. Denote
\begin{equation*}
E_j(\Qka) := \{x : x \in Q^{k+j}_\beta \text{ for some } Q^{k+j}_\beta \in \mathcal{E}_j(\Qka)\}
\end{equation*}
for the corresponding set of points. The set $E_j(\Qka)$ closely corresponds to the $\tau$-boundary $E^k_\alpha(\tau)$ of the cube $\Qka$.
\begin{lemma} \label{17step5}
Denote $C_5 = \frac{1}{8 A_0^4}$. When $C_4$ is chosen large enough and $\tau$ and $j$ have the connection $C_5 \delta^{j+1} < \tau \leq C_5 \delta^j$, then
\begin{equation*}
E^k_\alpha(\tau) \subset E_j(\Qka) \subset E^k_\alpha(C_6 \tau) \quad \text{for each } \Qka \in \D,
\end{equation*}
where $C_6$ is a constant independent of $j$ and $\tau$.
\end{lemma}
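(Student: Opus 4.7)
The plan is to prove the two inclusions in $E^k_\alpha(\tau) \subset E_j(\Qka) \subset E^k_\alpha(C_6 \tau)$ separately, after first noting that if $X \setminus \cl{\Qka} = \varnothing$ then both sides are empty by the conventions for the distance to an empty set, so I may assume $X \setminus \cl{\Qka} \neq \varnothing$.

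For the left-hand inclusion, I would take $x \in E^k_\alpha(\tau)$ and apply Lemma \ref{17step1} with $N = j$; the hypothesis $\tau \leq C_5 \delta^j$ lines up precisely with the threshold $\tau' = \frac{1}{8 A_0^4} \delta^N = C_5 \delta^j$ appearing in that lemma (and its proof shows the conclusion $l \geq k+j$ holds under the non-strict bound as well, since $\delta \in (0,1)$). This yields $\sigma \in I_{k+j}$ with $(k+j,\sigma) \preceq (k,\alpha)$ and $x \in Q^{k+j}_\sigma$. Because $x \in Q^{k+j}_\sigma$, I obtain $\rho(Q^{k+j}_\sigma, X \setminus \cl{\Qka}) \leq \rho(x, X \setminus \cl{\Qka}) < \tau \delta^k \leq C_5 \delta^{k+j}$, so fixing any $C_4 \geq C_5$ forces $Q^{k+j}_\sigma \in \mathcal{E}_j(\Qka)$, whence $x \in E_j(\Qka)$.

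For the right-hand inclusion, I would let $x \in E_j(\Qka)$, fix a cube $Q^{k+j}_\beta \in \mathcal{E}_j(\Qka)$ with $x \in Q^{k+j}_\beta$, and combine two estimates through the quasi-triangle inequality. Given $\eps > 0$, choose $z \in Q^{k+j}_\beta$ and $y \in X \setminus \cl{\Qka}$ with $\rho(z,y) < C_4 \delta^{k+j} + \eps$; the diameter bound from Theorem \ref{Qexist}(\ref{Qdiam}) gives $\rho(x,z) \leq C_1 \delta^{k+j}$, so $\rho(x,y) \leq A_0 \rho(x,z) + A_0 \rho(z,y) < A_0(C_1 + C_4) \delta^{k+j} + A_0 \eps$. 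Letting $\eps \to 0$ yields $\rho(x, X \setminus \cl{\Qka}) \leq A_0(C_1 + C_4) \delta^{k+j}$. I then convert back to scale $\delta^k$ using the other side of the window, $\tau > C_5 \delta^{j+1}$, which gives $\delta^{k+j} < \tau \delta^k/(C_5 \delta)$; this yields $\rho(x, X \setminus \cl{\Qka}) < C_6 \tau \delta^k$ with $C_6 := A_0(C_1 + C_4)/(C_5 \delta)$, so $x \in E^k_\alpha(C_6 \tau)$. Since $\delta, C_1, C_5$ already depend only on $A_0, A_1$, choosing $C_4$ to depend only on these constants makes $C_6$ independent of $j$ and $\tau$, as required.

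I do not anticipate a serious obstacle. The first inclusion is essentially a direct application of Lemma \ref{17step1} once the scale matching $\tau \leq C_5 \delta^j$ is observed, and the second is a quasi-triangle computation controlled by Theorem \ref{Qexist}(\ref{Qdiam}). The single delicate point is that $\rho(Q^{k+j}_\beta, X \setminus \cl{\Qka})$ is an infimum, so to exploit the bound $\leq C_4 \delta^{k+j}$ I must either approximate via $\eps$-near minimisers (as above) or apply the triangle inequality pointwise and take the infimum only at the end.
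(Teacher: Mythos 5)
Your proof is correct and follows essentially the same route as the paper's: Lemma \ref{17step1} (via its proof, to cover the non-strict threshold $\tau \leq C_5\delta^j$) for the first inclusion, and a quasi-triangle estimate combined with $\tau > C_5\delta^{j+1}$ for the second, arriving at the same constant $C_6 = A_0(C_1+C_4)\delta^{-1}C_5^{-1}$. Your first inclusion is in fact slightly cleaner: you bound $\rho(Q^{k+j}_\sigma, X\setminus\cl{\Qka})$ directly by $\rho(x, X\setminus\cl{\Qka})$ since $x$ itself lies in $Q^{k+j}_\sigma$, so $C_4 \geq C_5$ suffices, whereas the paper detours through the center $z^{k+j}_\sigma$ and the diameter bound and takes $C_4 = A_0^2(C_1+C_5)$.
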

\begin{proof}
Fix $\Qka \in \D$. We start by showing the first inclusion of the claim. Take $x \in E^k_\alpha(\tau)$. Then $x \in Q^{k+j}_\sigma$ for some $\sigma \in I_{k+j}$, especially when $\tau \leq \frac{1}{8 A_0^4} \delta^j = C_5 \delta^j$, by Lemma \ref{17step1} and its proof. Thus, $\rho(z^{k+j}_\sigma, x) \leq C_1 \delta^{k+j}$ by the property (\ref{Qexist}\ref{Qdiam}) of dyadic cubes, and by estimating with the triangle inequality (\ref{quasitriangle}), we get
\begin{eqnarray*}
\rho(Q^{k+j}_\sigma, X \setminus \cl{\Qka}) & \leq & A_0 \rho(Q^{k+j}_\sigma, z^{k+j}_\sigma) + A_0^2 \rho(z^{k+j}_\sigma, x) + A_0^2 \rho(x, X \setminus \cl{\Qka}) \\
& \leq & 0 + A_0^2 C_1 \delta^{k+j} + A_0^2 \tau \delta^k \\
& \leq & A_0^2 C_1 \delta^{k+j} + A_0^2 C_5 \delta^j \delta^k \\
& = & A_0^2 (C_1 + C_5) \delta^{k+j}.
\end{eqnarray*}
By choosing $C_4$ to be the coefficient $A_0^2 (C_1 + C_5)$ above, it holds $Q^{k+j}_\sigma \in \mathcal{E}_j(\Qka)$ implying also $x \in E_j(\Qka)$.

Next, we show the latter inclusion of the claim. Take $x \in E_j(\Qka)$. Then $x \in Q^{k+j}_\beta$ for some $Q^{k+j}_\beta \in \mathcal{E}_j(\Qka)$. Let $y \in Q^{k+j}_\beta$, and estimate with the triangle inequality (\ref{quasitriangle}) and the property (\ref{Qexist}\ref{Qdiam}) of dyadic cubes \begin{eqnarray*}
\rho(x, X \setminus \cl{\Qka}) & \leq & A_0 \rho(x,y) + A_0 \rho(y, X \setminus \cl{\Qka}) \\
& \leq & A_0 C_1 \delta^{k+j} + A_0 \rho(y, X \setminus \cl{\Qka}).
\end{eqnarray*}
By taking the infimum over the set $\{ y \in Q^{k+j}_\beta \}$ of the both sides of the derived inequality, we get
\begin{eqnarray*}
\rho(x, X \setminus \cl{\Qka}) & \leq & A_0 C_1 \delta^{k+j} + A_0 \rho(Q^{k+j}_\beta, X \setminus \cl{\Qka}) \\
& \leq & A_0 C_1 \delta^{k+j} + A_0 C_4 \delta^{k+j} \\
& = & A_0 (C_1 + C_4) \delta^{-1} \delta^{j+1} \delta^k \\
& \leq &  A_0 (C_1 + C_4) \delta^{-1} \frac{\tau}{C_5} \delta^k \\[-1ex]
& = & C_6 \tau \delta^k,
\end{eqnarray*}
where we know $Q^{k+j}_\beta \in \mathcal{E}_j(\Qka)$ and $C_5 \delta^{j+1} \leq \tau$. Because in addition $x \in \Qka$, we deduce $x \in E^k_\alpha(C_6 \tau)$.
\end{proof}

Let us finally prove the claim (\ref{Qboundary}) of Theorem \ref{Qexist}. To prove the claim for small values of $t$, i.e.
\begin{equation*}
\mu(E^k_\alpha(t)) \leq C_2 t^\eta \mu(\Qka) \quad \text{for each } 0 < t \leq C_5,
\end{equation*}
it suffices to show that there exist $C$ and $\eta$ such that
\begin{equation} \label{Qboundary2}
\mu(E_j(\Qka)) \leq C \delta^{j \eta} \mu(\Qka) \quad \text{for each } j \geq 0.
\end{equation}
Namely, if we assume that (\ref{Qboundary2}) holds, we can choose the relation
\begin{equation*}
C_5 \delta^{j+1} < t \leq C_5 \delta^j
\end{equation*}
between $j$ and $t$. Then the set $\{j \geq 0\}$ corresponds to the set $\{0 < t \leq C_5\}$, and by Lemma \ref{17step5} and monotonicity of the measure $\mu$, we get
\begin{eqnarray*}
\mu(E^k_\alpha(t)) & \leq & \mu(E_j(\Qka))\\
& \leq & C \delta^{j \eta} \mu(\Qka) \\
& = & C \delta^{-\eta} \delta^{(j+1) \eta} \mu(\Qka) \\
& \leq & C \delta^{-\eta} \big( \frac{t}{C_5} \big)^\eta \mu(\Qka) \\
& \leq & C_2 t^\eta \mu(\Qka),
\end{eqnarray*}
where $C_2$ has been chosen to be greater than or equal to $C \delta^{-\eta} C_5^{-\eta}$.

To prove the claim (\ref{Qboundary2}), fix a large index $J \in \N$ for which
\begin{equation} \label{largeJ}
\mu(E_J(\Qka)) \leq \frac{1}{2} \mu(\Qka) \quad \text{for each } \Qka \in \D.
\end{equation}
Such $J$ exists since there exists $\tau > 0$ such that
\begin{equation*}
\mu(E^k_\alpha(C_6 \tau)) \leq \frac{1}{2} \mu(\Qka) \quad \text{for each } \Qka \in \D
\end{equation*}
by Lemma \ref{17step4}, and further there exists an index $J$ determined by $\tau$ such that
\begin{equation*}
\mu(E_J(\Qka)) \leq \mu(E^k_\alpha(C_6 \tau)) \quad \text{for each } \Qka \in \D
\end{equation*}
by Lemma \ref{17step5}.

Next, we construct new families of cubes $\mathcal{F}_n(\Qka)$ from the families $\mathcal{E}_j(\Qka)$ recursively such that $\mathcal{F}_1(\Qka) := \mathcal{E}_J(\Qka)$ and
\begin{equation} \label{FnQka}
\mathcal{F}_{n+1}(\Qka) := \bigcup_{Q^{k+nJ}_\beta \in \mathcal{F}_n(\Qka)} \mathcal{E}_J(Q^{k+nJ}_\beta), \quad \text{when } n \geq 1.
\end{equation}
The family $\mathcal{F}_n(\Qka)$ consists of cubes in the generation $k+nJ$, and its cubes are close to the boundary of the cube $J$ generations above. Especially, it holds
\begin{equation} \label{EnJFn}
\mathcal{E}_{nJ}(\Qka) \subset \mathcal{F}_n(\Qka) \quad \text{for each } n \geq 1,
\end{equation}
which we can show by induction; The case $n=1$ follows directly from the definition of the family $\mathcal{F}_n(\Qka)$. Then suppose that $\mathcal{E}_{nJ}(\Qka) \subset \mathcal{F}_n(\Qka)$ for some $n \geq 1$, and take $Q^{k+(n+1)J}_\gamma \in \mathcal{E}_{(n+1)J}(\Qka)$. By the definition of the family $\mathcal{E}_j(\Qka)$, we get
\begin{equation*}
\rho(Q^{k+(n+1)J}_\gamma, X \setminus \cl{\Qka}) \leq C_4 \delta^{k+(n+1)J},
\end{equation*}
and thus also
\begin{equation*}
\rho(Q^{k+(n+1)J}_\gamma, X \setminus \cl{Q^{k+nJ}_\beta}) \leq C_4 \delta^{k+(n+1)J},
\end{equation*}
where $\beta$ is determined by the condition $(k+(n+1)J,\gamma) \preceq (k+nJ,\beta) \preceq (k,\alpha)$. This means that $Q^{k+(n+1)J}_\gamma \in \mathcal{E}_J(Q^{k+nJ}_\beta)$. On the other hand, $Q^{k+nJ}_\beta \in \mathcal{E}_{nJ}(\Qka)$ since
\begin{eqnarray*}
\rho(Q^{k+nJ}_\beta, X \setminus \cl{\Qka}) & \leq & \rho(Q^{k+(n+1)J}_\gamma, X \setminus \cl{\Qka}) \\
& \leq & C_4 \delta^{k+(n+1)J} \\
& \leq & C_4 \delta^{k+nJ}.
\end{eqnarray*}
By combining the induction assumption $\mathcal{E}_{nJ}(\Qka) \subset \mathcal{F}_n(\Qka)$ with the results derived above, we get
\begin{equation*}
\begin{split}
Q^{k+(n+1)J}_\gamma \in \mathcal{E}_J(Q^{k+nJ}_\beta) & \subset \bigcup_{Q^{k+nJ}_\beta \in \mathcal{E}_{nJ}(\Qka)} \mathcal{E}_J(Q^{k+nJ}_\beta) \\
& \subset \bigcup_{Q^{k+nJ}_\beta \in \mathcal{F}_n(\Qka)} \mathcal{E}_J(Q^{k+nJ}_\beta) = \mathcal{F}_{n+1}(\Qka).
\end{split}
\end{equation*}
Hence $\mathcal{E}_{(n+1)J}(\Qka) \subset \mathcal{F}_{n+1}(\Qka)$.

Because of the result (\ref{EnJFn}), also $E_{nJ}(\Qka) \subset F_n(\Qka)$, where $F_n(\Qka)$ is the corresponding set of points
\begin{equation*}
F_n(\Qka) := \{x : x \in Q^{k+nJ}_\beta \text{ for some } Q^{k+nJ}_\beta \in \mathcal{F}_n(\Qka)\}.
\end{equation*}
Because the cubes $Q^{k+nJ}_\beta$ are disjoint by Lemma \ref{QkaQkb} and $E_J(Q^{k+nJ}_\beta) \subset Q^{k+nJ}_\beta$ by their definition, the recursion (\ref{FnQka}) for $\mathcal{F}_n(\Qka)$ also offers a recursion for the measures of the sets $F_n(\Qka)$: $\mu(F_1(\Qka)) = \mu(E_J(\Qka))$ and
\begin{equation} \label{muF1}
\mu(F_{n+1}(\Qka)) = \sum_{Q^{k+nJ}_\beta \in \mathcal{F}_n(\Qka)} \mu(E_J(Q^{k+nJ}_\beta)), \quad \text{when } n \geq 1.
\end{equation}
By applying the recursion (\ref{muF1}) and the inequality (\ref{largeJ}) $n$ times and noticing that the cubes in each $\mathcal{F}_i(\Qka)$ are pairwise disjoint, we get
\begin{eqnarray*}
\mu(F_n(\Qka)) & = & \sum_{Q^{k+(n-1)J}_\beta \in \mathcal{F}_{n-1}(\Qka)} \mu(E_J(Q^{k+(n-1)J}_\beta)) \\
& \leq &  \sum_{Q^{k+(n-1)J}_\beta \in \mathcal{F}_{n-1}(\Qka)} \frac{1}{2} \mu(Q^{k+(n-1)J}_\beta) \\
& = & \frac{1}{2} \mu(F_{n-1}(\Qka)) \\
& \vdots & \\
& \leq & \big( \frac{1}{2} \big)^{n-1} \mu(F_1(\Qka)) \\
& = & \big( \frac{1}{2} \big)^{n-1} \mu(E_J(\Qka)) \\
& \leq & \big( \frac{1}{2} \big)^{n-1} \frac{1}{2} \mu(\Qka) \\
& = & \frac{1}{2^n} \mu(\Qka).
\end{eqnarray*}
Combining this with the result (\ref{EnJFn}) and monotonicity of $\mu$ implies
\begin{equation} \label{mEnJ}
\mu(E_{nJ}(\Qka)) \leq \mu(F_n(\Qka)) \leq \frac{1}{2^n} \mu(\Qka) = \delta^{\eta nJ} \mu(\Qka) \quad \text{for each } n \geq 0,
\end{equation}
where we have chosen $\eta$ such that $\delta^{\eta J} = \frac{1}{2}$. Next fix an index $j \geq 0$ and represent it as $j = nJ+m$, where $n \geq 0$ and $0 \leq m \leq J-1$. Then
\begin{equation*}
\begin{split}
\mu(E_j(\Qka)) & \leq \mu(E_{nJ}(\Qka)) \leq \delta^{\eta nJ} \mu(\Qka) = \delta^{-\eta m} \delta^{\eta j} \mu(\Qka) \\
& \leq \delta^{-\eta J} \delta^{\eta j} \mu(\Qka) = C \delta^{\eta j} \mu(\Qka)
\end{split}
\end{equation*}
by the fact $E_j(\Qka) \subset E_{nJ}(\Qka)$ and the result (\ref{mEnJ}). This proves the claim (\ref{Qboundary2}), i.e. the property (\ref{Qexist}\ref{Qboundary}) of dyadic cubes for small $t$.

In the case when $t$ is large, the property (\ref{Qexist}\ref{Qboundary}) of dyadic cubes, i.e.
\begin{equation*}
\mu(E^k_\alpha(t)) \leq C_2 t^\eta \mu(\Qka) \quad \text{for each } t > C_5,
\end{equation*}
follows in a straightforward way: because $E^k_\alpha(t) \subset \Qka$, it suffices to choose $C_2$ such that $1 \leq C_2 t^\eta$ when $t > C_5$, which is achieved by choosing $C_2 \geq C_5^{-\eta}$. Hence the whole claim (\ref{Qexist}\ref{Qboundary}) has been proved, which also completes the proof of Theorem \ref{Qexist}.

In the end of the section, we prove some additional useful properties for dyadic cubes following from Theorem \ref{Qexist}. First of them tells that large cubes are equal to the whole space if the space is bounded.
\begin{corollary} \label{Xbounded}
The space $(X,\rho)$ is bounded if and only if there exists an index pair $(k,\alpha)$ such that $X = \Qlb$ for each $(l,\beta) \succeq (k,\alpha)$.
\end{corollary}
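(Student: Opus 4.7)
The plan is to extract the forward direction from the diameter bound (\ref{Qexist}\ref{Qdiam}) applied at $(l,\beta) = (k,\alpha)$, and to prove the reverse direction by descending to a generation so coarse that its unique cube is forced to be all of $X$.

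For the ``if'' direction, reflexivity of $\preceq$ yields $(k,\alpha) \succeq (k,\alpha)$, so the hypothesis immediately gives $X = \Qka$; then (\ref{Qexist}\ref{Qdiam}) delivers $\diam(X) \leq C_1 \delta^k < \infty$, so $X$ is bounded.

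For the ``only if'' direction, set $D := \diam(X) < \infty$ and, because $\delta \in (0,1)$, choose $k \in \Z$ so negative that $a_0 \delta^k > D$. I claim this $k$ works, with the unique $\alpha \in I_k$ constructed below. First, any two distinct points of $Z_k$ would lie at distance at most $D < \delta^k$, contradicting the separation (\ref{zka}); since $Z_k$ is non-empty, $I_k = \{\alpha\}$ is a singleton. Second, for every $y \in X$ we have $\rho(\zka, y) \leq D < a_0 \delta^k$, so $y \in B(\zka, a_0 \delta^k) \subset \Qka$ by (\ref{Qexist}\ref{Qball}); combined with the automatic inclusion $\Qka \subset X$, this yields $\Qka = X$.

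It remains to check that $\Qlb = X$ for every $(l,\beta) \succeq (k,\alpha)$. By (\ref{potree}\ref{pogeneration}) any such $(l,\beta)$ has $l \leq k$, hence $a_0 \delta^l \geq a_0 \delta^k > D$, and the two-step argument of the previous paragraph applies verbatim at generation $l$ to produce a singleton $I_l$ whose unique cube coincides with $X$. The uniqueness part of (\ref{potree}\ref{poanchestor}) forces the sole label of that singleton to be $\beta$, so $\Qlb = X$. I do not expect a substantive obstacle: the whole argument reduces to choosing $k$ so that $a_0 \delta^k$ dominates $\diam(X)$, after which Theorem \ref{Qexist} and Lemma \ref{potree} do all the work; the only point worth noting is that the threshold on $k$ is imposed \emph{after} $a_0$ and $\delta$ have been fixed, so no further tuning of those parameters is needed.
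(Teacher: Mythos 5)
Your proof is correct, and the ``if'' direction (reflexivity forces $X = \Qka$, then the diameter bound (\ref{Qexist}\ref{Qdiam}) gives boundedness) is essentially identical to the paper's.

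The ``only if'' direction, however, takes a genuinely different route for the final step. Once you have $X = \Qka$, the paper simply invokes the monotonicity built into Definition \ref{Qka}: if $(k,\alpha) \preceq (l,\beta)$ then by transitivity every descendant of $(k,\alpha)$ is a descendant of $(l,\beta)$, so $\Qka \subset \Qlb$, hence $X \subset \Qlb \subset X$ and we are done in one line. You instead re-run the whole argument at every coarser generation $l \leq k$: noting that $\delta^l \geq \delta^k > D$ forces $Z_l$ to be a singleton, you identify its sole label with $\beta$ and show its cube is $X$. This is valid, but it repeats work that the inclusion $\Qka \subset \Qlb$ makes unnecessary, and the appeal to the uniqueness in (\ref{potree}\ref{poanchestor}) is superfluous --- once $I_l$ is a singleton, $\beta$ is forced to be its only element simply because $\beta \in I_l$ by hypothesis. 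On the other hand, your observation that $Z_l$ is literally a singleton for all sufficiently small $l$ is a nice structural remark that the paper does not make explicit; the paper just picks an arbitrary $\alpha \in I_k$ without noting there is only one choice. Both approaches are sound; the paper's is tighter, yours is a little more self-contained at the cost of redundancy.
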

\begin{proof}
Suppose that $(X,\rho)$ is bounded, i.e. $\diam(X) < \infty$. Choose an index $k \in \Z$ such that $a_0 \delta^k > \diam(X)$, and let $\alpha \in I_k$. Then for any $x \in X$ it holds 
\begin{equation*}
\rho(\zka, x) \leq \diam(X) < a_0 \delta^k,
\end{equation*}
i.e. $X \subset B(\zka, a_0 \delta^k)$. Because $B(\zka, a_0 \delta^k) \subset \Qka$ by the property (\ref{Qexist}\ref{Qball}) of dyadic cubes, we deduce that $X \subset \Qka$. Further, the definition of dyadic cubes implies $X \subset \Qlb$ for each $(l,\beta) \succeq (k,\alpha)$. On the other hand, because cubes are subsets of $X$, the equation holds.

Then suppose that there exists $(k,\alpha)$ such that $X = \Qlb$ for each $(l,\beta) \succeq (k,\alpha)$, which especially implies $X = \Qka$. Because $\Qka \subset B(\zka, C_1 \delta^k)$ by the property (\ref{Qexist}\ref{Qdiam}) of dyadic cubes, we also have $X \subset B(\zka, C_1 \delta^k)$. Hence the space $(X,\rho)$ is bounded.
\end{proof}

Dyadic cubes have a doubling property telling that there is an upper bound for the ratio of the measures of a dyadic cube and its child.
\begin{corollary} \label{Qmeasureparent}
Let $\Qka, Q^{k-1}_\beta \in \D$ such that $\Qka \subset Q^{k-1}_\beta$. Then there exists a constant $C$ independent of $k$, $\alpha$ and $\beta$ such that
\begin{equation*}
\mu(Q^{k-1}_\beta) \leq C \mu(\Qka).
\end{equation*}
\end{corollary}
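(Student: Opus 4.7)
\begin{proof}
The plan is to sandwich $Q^{k-1}_\beta$ between two balls centered at $z^k_\alpha$ whose radii differ only by a bounded factor, and then apply the doubling property finitely many times.

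First, since $z^k_\alpha \in \Qka \subset Q^{k-1}_\beta$ and $z^{k-1}_\beta \in Q^{k-1}_\beta$, the property (\ref{Qexist}\ref{Qdiam}) of dyadic cubes gives $\rho(z^k_\alpha, z^{k-1}_\beta) \leq C_1 \delta^{k-1}$. Combined with $Q^{k-1}_\beta \subset B(z^{k-1}_\beta, C_1 \delta^{k-1})$ (up to an arbitrarily small enlargement to pass from the diameter bound to an open ball) and the quasi-triangle inequality (\ref{quasitriangle}), any $y \in Q^{k-1}_\beta$ satisfies
\begin{equation*}
\rho(y, z^k_\alpha) \leq A_0 \rho(y, z^{k-1}_\beta) + A_0 \rho(z^{k-1}_\beta, z^k_\alpha) \leq 2 A_0 C_1 \delta^{k-1}.
\end{equation*}
Hence $Q^{k-1}_\beta \subset B(z^k_\alpha, R \delta^k)$, where $R := 2 A_0 C_1 \delta^{-1}$ depends only on $A_0, A_1$.

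On the other side, the property (\ref{Qexist}\ref{Qball}) gives $B(z^k_\alpha, a_0 \delta^k) \subset \Qka$. Choose $d \in \N$ with $2^d a_0 \geq R$; such $d$ depends only on $A_0, A_1$. Then $B(z^k_\alpha, R \delta^k) \subset B(z^k_\alpha, 2^d a_0 \delta^k)$, and applying the doubling condition (\ref{doubling}) iteratively $d$ times yields
\begin{equation*}
\mu(Q^{k-1}_\beta) \leq \mu(B(z^k_\alpha, 2^d a_0 \delta^k)) \leq A_1^d \, \mu(B(z^k_\alpha, a_0 \delta^k)) \leq A_1^d \, \mu(\Qka).
\end{equation*}
Setting $C := A_1^d$ proves the claim, and $C$ is independent of $k$, $\alpha$, $\beta$ since it is built only from $A_0$, $A_1$, $\delta$, $a_0$, $C_1$, all of which have been fixed as constants depending only on $A_0$ and $A_1$.
\end{proof}

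There is no real obstacle here: the corollary is a direct consequence of the already-established properties (\ref{Qexist}\ref{Qball}) and (\ref{Qexist}\ref{Qdiam}) together with the doubling condition. The only mild subtlety is the passage from the diameter bound to an open-ball inclusion, which is harmless because it only changes the constant $R$ by a factor arbitrarily close to $1$.
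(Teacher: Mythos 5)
Your proof is correct and follows essentially the same sandwich-and-double strategy as the paper: enclose $Q^{k-1}_\beta$ in a ball centred at $\zka$ of radius comparable to $\delta^k$, compare it with the inner ball $B(\zka,a_0\delta^k) \subset \Qka$, and iterate the doubling condition. The only (inconsequential) difference is in how you bound $\rho(\zka,z^{k-1}_\beta)$: you observe that both centres lie in $Q^{k-1}_\beta$ and invoke the diameter bound (\ref{Qexist}\ref{Qdiam}) to get $\rho(\zka,z^{k-1}_\beta)\leq C_1\delta^{k-1}$, whereas the paper first translates $\Qka\subset Q^{k-1}_\beta$ into $(k,\alpha)\preceq(k-1,\beta)$ via Lemma \ref{connection} and then applies property (\ref{potree}\ref{pochildren}) to obtain the tighter estimate $\rho(\zka,z^{k-1}_\beta)<\delta^{k-1}$. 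Your route is marginally more self-contained (it stays inside Theorem \ref{Qexist} and never touches the partial order), at the cost of a slightly larger constant $R$; the paper's route gives a cleaner constant. Either way the constant is fixed in terms of $A_0,A_1$, so both proofs are equally valid, and your remark about the non-strict inequality in the ball inclusion is a harmless technicality that the paper itself quietly glosses over in the same spot.
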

\begin{proof}
We show first that balls surrounding the cubes $\Qka, Q^{k-1}_\beta$ have the property
\begin{equation} \label{ballparent}
B(z^{k-1}_\beta, C_1 \delta^{k-1}) \subset B(\zka, A_0 \delta^{-1} (1+C_1) \delta^k),
\end{equation}
where $C_1$ is the constant in the property (\ref{Qexist}\ref{Qdiam}) of dyadic cubes. Suppose that $x \in B(z^{k-1}_\beta, C_1 \delta^{k-1})$. Then estimating with the triangle inequality (\ref{quasitriangle}) and the property (\ref{potree}\ref{pochildren}) of the partial order $\preceq$ leads to
\begin{eqnarray*}
\rho(\zka, x) & \leq & A_0 \rho(\zka, z^{k-1}_\beta) + A_0 \rho(z^{k-1}_\beta, x) \\
& < & A_0 \delta^{k-1} + A_0 C_1 \delta^{k-1} \\
& = & A_0 \delta^{-1} (1+C_1) \delta^k,
\end{eqnarray*}
i.e. $x \in B(\zka, A_0 \delta^{-1} (1+C_1) \delta^k)$.

Next, by estimating with the properties (\ref{Qexist}\ref{Qdiam}), (\ref{Qexist}\ref{Qball}) of dyadic cubes, the inclusion (\ref{ballparent}) and monotonicity and the doubling propety (\ref{doubling}) of $\mu$, we get
\begin{eqnarray*}
\mu(Q^{k-1}_\beta) & \leq & \mu(B(z^{k-1}_\beta, C_1 \delta^{k-1})) \\
& \leq & \mu \big( B(\zka, A_0 \delta^{-1} (1+C_1) \delta^k) \big) \\
& \leq & C \mu(B(\zka, a_0 \delta^k)) \\
& \leq & C \mu(\Qka),
\end{eqnarray*}
where $C$ is the doubling coefficient determined by the ratio of $A_0 \delta^{-1} (1+C_1)$ and $a_0$, i.e. $C = A_1^d$ where $d \in \N$ is determined by the condition
\begin{equation*}
2^d a_0 \geq A_0 \delta^{-1} (1+C_1).
\end{equation*}
\end{proof}

The last property of dyadic cubes we will show tells that the boundaries of dyadic cubes have measure zero.
\begin{corollary}
For each $\Qka \in \D$ it holds $\mu(\partial \Qka) = 0$, where $\partial \Qka$ is the boundary of the cube $\Qka$.
\end{corollary}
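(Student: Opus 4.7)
The plan is to reduce the claim to an application of the $\tau$-boundary estimate (\ref{Qexist}\ref{Qboundary}) by slicing $\partial \Qka$ into the null set $X\setminus \hat{X}$ and the intersections with the other cubes of generation $k$.

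First I would use Theorem~\ref{Qexist}(\ref{Qopen}): since $\Qka$ is open, $\partial \Qka = \cl{\Qka}\setminus \Qka$ is disjoint from $\Qka$. Because $\hat{X}\subset \bigcup_{\gamma\in I_k} Q^k_\gamma$ and $\mu(X\setminus\hat{X})=0$, and because $I_k$ is countable (every ball $B(x_0,n)$ contains only finitely many centers of $Z_k$ by the separation (\ref{zka}) and Lemma~\ref{assouad}, and $X=\bigcup_{n\in\N} B(x_0,n)$ for any fixed $x_0$), countable subadditivity gives
\begin{equation*}
\mu(\partial \Qka) \leq \mu(X\setminus\hat{X}) + \sum_{\beta\in I_k,\,\beta\neq\alpha} \mu(\partial \Qka \cap \Qkb) = \sum_{\beta\neq\alpha} \mu(\partial \Qka \cap \Qkb).
\end{equation*}

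The geometric heart of the proof is to show each summand vanishes. For $\beta\neq\alpha$, Lemma~\ref{QkaQkb} gives $\Qka\cap \Qkb=\varnothing$; and since $\Qkb$ is open, any putative point $y\in \Qka\cap \cl{\Qkb}$ would sit in some open ball contained in $\Qka$ which must meet $\Qkb$ by definition of closure, a contradiction. Hence $\Qka\subset X\setminus \cl{\Qkb}$. Now if $x\in \partial \Qka\cap \Qkb$, then $x\in \cl{\Qka}$, so every open ball $B(x,\eps)$ intersects $\Qka\subset X\setminus\cl{\Qkb}$ (balls form a neighborhood base because they are open by Definition~\ref{homogeneous}); therefore $\rho(x, X\setminus\cl{\Qkb})=0$, i.e.\ $x$ lies in the $\tau$-boundary $E^k_\beta(t)$ of $\Qkb$ from (\ref{Ekat}) for every $t>0$.

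Combining these facts, $\partial \Qka\cap \Qkb \subset \bigcap_{n\geq 1} E^k_\beta(1/n)$, and Theorem~\ref{Qexist}(\ref{Qboundary}) applied to $\Qkb$ bounds each $\mu(E^k_\beta(1/n))$ by $C_2 n^{-\eta}\mu(\Qkb)$; since $\mu(\Qkb)<\infty$ because $\Qkb\subset B(\zkb,C_1\delta^k)$ by Theorem~\ref{Qexist}(\ref{Qdiam}), letting $n\to\infty$ forces $\mu(\partial \Qka\cap \Qkb)=0$. Summing over the countably many $\beta\neq\alpha$ completes the argument. The only real obstacle is the topological bookkeeping, namely verifying $\Qka\cap\cl{\Qkb}=\varnothing$ and thereby converting the assertion ``$x\in \partial \Qka$ lies in $\Qkb$'' into the quantitative condition ``$\rho(x,X\setminus\cl{\Qkb})=0$'' that matches (\ref{Qexist}\ref{Qboundary}); once that is in place the measure-theoretic estimate is purely mechanical.
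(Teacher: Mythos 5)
Your proof is correct, but it takes a genuinely different and noticeably heavier route than the paper's. The paper shows the purely topological fact that $\partial \Qka \cap \Qkb = \varnothing$ for \emph{every} $\beta \in I_k$: if $x \in \partial\Qka \cap \Qkb$ with $\beta \neq \alpha$, then openness of $\Qkb$ gives a ball $B(x,\eps) \subset \Qkb$, hence $\rho(x, X\setminus\Qkb) \geq \eps$, while $x \in \cl{\Qka}$ and $\Qka \subset X\setminus\Qkb$ force $\rho(x, X\setminus\Qkb) \leq \rho(x,\Qka) = 0$ --- a contradiction. Consequently $\partial\Qka \subset X \setminus \bigcup_{\beta\in I_k}\Qkb = N_k$, which is null by (\ref{Qexist}\ref{Qfull}); no countability of $I_k$ and no appeal to the boundary estimate (\ref{Qexist}\ref{Qboundary}) are needed. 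Your argument instead decomposes $\partial\Qka$ over $X\setminus\hat X$ and the cubes $\Qkb$, justifies countability of $I_k$ via Lemma \ref{assouad} (a point the paper never needs), and then invokes the quantitative estimate (\ref{Qexist}\ref{Qboundary}) with $t = 1/n \to \infty$. Everything you write is sound, but observe that your own intermediate facts already finish the job: once you know $x \in \Qkb$ (open), $x \in \cl{\Qka}$, and $\Qka \cap \Qkb = \varnothing$, the set $\partial\Qka\cap\Qkb$ is empty, so the passage through $E^k_\beta(1/n)$ measures a set that is in fact void. Your approach does have the merit of being robust --- it would survive even if the cubes failed to be open and one only knew the $\tau$-boundary estimate --- but in the present setting it is strictly more machinery than required.
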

\begin{proof}
Let us show that
\begin{equation} \label{dQkaQkb}
\partial \Qka \cap \Qkb = \varnothing \quad \text{for each } \Qka, \Qkb \in \D_k.
\end{equation}
The case $\alpha = \beta$ is trivial since $\Qka$ is an open set as a dyadic cube and does not contain its boundary. In the case $\alpha \neq \beta$, suppose that there exists $x \in \partial \Qka \cap \Qkb$ for some $k \in \Z$, $\alpha, \beta \in I_k$. Because $x \in \partial \Qka$, we must have $\rho(x, \Qka) = 0$. On the other hand, because $x \in \Qkb$ and $\Qkb$ is an open set as a dyadic cube, it holds $B(x,\eps) \subset \Qkb$ for some $\eps > 0$. Then especially $\rho(x, X \setminus \Qkb) \geq \eps$. Because $\Qka \subset X \setminus \Qkb$ by the property (\ref{Qexist}\ref{Qtree}) of dyadic cubes, we deduce
\begin{equation*}
0 < \eps \leq \rho(x, X \setminus \Qkb) \leq \rho(x, \Qka) = 0,
\end{equation*}
which is a contradiction. The result (\ref{dQkaQkb}) implies
\begin{equation*}
\partial \Qka \subset X \setminus \bigcup_{\beta \in I_k} \Qkb
\end{equation*}
for any $\Qka \in \D_k$, and thus $\mu(\partial \Qka) = 0$ by the property (\ref{Qexist}\ref{Qfull}) of dyadic cubes and monotonicity of the measure $\mu$.
\end{proof}

The construction of dyadic cubes could be modified such that each generation of dyadic cubes would cover the whole space $X$ instead of excluding a set of measure zero. This could be done by including also boundaries in dyadic cubes following a suitable logic. Such modification has been made, for example, in references [\ref{aimar2}] and [\ref{hytonen}]. However, the modification is usually not very useful since the most typical applications of dyadic cubes are related to integral operators, which are not affected by sets of measure zero.

\clearpage

\end{document}